\newcommand{\beq}{\begin{equation}}
\newcommand{\eeq}{\end{equation}}
\newcommand{\bea}{\begin{eqnarray}}
\newcommand{\eea}{\end{eqnarray}}
\newcommand{\beas}{\begin{eqnarray*}}
\newcommand{\eeas}{\end{eqnarray*}}
\def\me{{\mathbb  E}}
\def\mr{{\mathbb  R}}
\def\mp{{\mathbb  P}}
\def\mbj{\mathbf J}
\newtheorem{theorem}{Theorem}[section]
\newtheorem{hypothesis}[theorem]{Hypothesis}
\newtheorem{definition}[theorem]{Definition}
\newtheorem{proposition}[theorem]{Proposition}
\newtheorem{corollary}[theorem]{Corollary}
\newtheorem{lemma}[theorem]{Lemma}
\newtheorem{remark}[theorem]{Remark}
\newtheorem{example}[theorem]{Example}
\newtheorem{examples}[theorem]{Examples}
\newtheorem{foo}[theorem]{Remarks}
\newcommand{\ga}{\gamma}
\title[Gradient bounds for solutions of SDEs driven by fBms]{Gradient bounds for solutions of stochastic differential equations  driven by fractional Brownian motions}
\author{Fabrice Baudoin, Cheng Ouyang}
\address{Department of Mathematics\\Purdue University \\
West Lafayette, IN 47907} \email[Fabrice Baudoin]{fbaudoin@math.purdue.edu}\email[Cheng Ouyang]{couyang@math.purdue.edu}
\thanks{First author supported in part by
NSF Grant DMS 0907326}
\begin{document}

\maketitle

\begin{center}
\textit{To Pr. David Nualart 60th's birthday}
\end{center}
\

\begin{abstract}
We study some functional inequalities satisfied by the distribution of the solution of a stochastic differential equation driven by fractional Brownian motions. Such functional inequalities are obtained through new integration by parts formulas on the path space of a fractional Brownian motion.
\end{abstract}

\tableofcontents

\section{Introduction}

Let $(X^x_t)_{t \ge 0}$ be the solution of a stochastic differential equation
\[ 
X_t=x+\sum_{i=1}^n \int_0^t V_i (X_s^x)  dB^i_s,
\]
where $(B_t)_{t \ge 0}$ is a $n$-dimensional fractional Brownian motion with Hurst parameter $H>\frac{1}{2}$. Under ellipticity assumptions and classical boundedness conditions (see \cite{ba-ha} and \cite{nu-sa}), the random variable $X^x_t$, $t>0$, admits a smooth density with respect to the Lebesgue measure of $\mathbb{R}^n$ and  the functional operator
\[
P_t f(x)=\mathbb{E}\left(f(X_t^x)\right )
\]
is regularizing in the sense that it transforms a bounded Borel function $f$ into a smooth function $P_t f$ for $t>0$. In this note we aim to quantify precisely this regularization property and prove that, under the above assumptions, bounds of the type:

\[
\left| V_{i_1} \cdots V_{i_k} P_t f(x) \right| \le C_{i_1 \cdots i_k}(t,x) \| f \|_\infty, \quad t>0, x \in \mathbb{R}^n,
\]
are  satisfied.  We are moreover able to get an explicit  blow up rate when $t \to 0$: For a fixed $x \in \mathbb{R}^n$, when $t \to 0$,
\[
C_{i_1 \cdots i_k}(t,x) =O\left( \frac{1}{t^{kH}} \right).
\]
Our strategy to prove such bounds is the following. If $f$ is a $C^\infty$ bounded function on $\mathbb{R}^n$, we first prove (see Lemma \ref{th: derivative-semigroup}) that the following commutation holds
\[
V_i P_t f (x)=
\mathbb{E} \left( \sum_{k=1}^n\alpha^i_k(t,x) V_k f (X_t^x) \right),
\]
where the $\alpha (t,x)$'s solve an explicit system of stochastic differential equations. Then, using an integration by parts formula in the path space of the underlying fractional Brownian motion (see Theorem Theorem \ref{th: IBP}) we may rewrite the expectation of the right hand side of the above inequality as $\mathbb{E} \left(\Phi_i (t,x) f (X_t^x) \right)$ where $\Phi_i (t,x)$ is shown to be bounded in $L^p$, $1 \le p <+\infty$ with a blow up rate that may be controlled when $t \to 0$. It yields a bounds on $|V_i P_t f (x)|$. Bounds on higher order derivatives are obtained in a similar way, by iterating the procedure just described. Let us mention here that the bounds we obtain depends on $L^p$ bounds for the inverse of the Malliavin matrix of $X_t^x$. As of today, to the knowledge of the authors such bounds have not yet been obtained in the rough case $H<\frac{1}{2}$. The extension of our results to the case $H<\frac{1}{2}$ is thus not straightforward.

\

We close the paper by an interesting geometric situation where we may prove an optimal and global gradient bound with a constant that is independent from the starting point $x$. In the situation where the equation is driven by a Brownian motion such global gradient bound is usually related to lower bounds on the Ricci curvature of the Riemannian geometry given by the vector fields $V_i$'s, which makes  interesting the fact that the bound also holds with fractional Brownian motions.

\section{ Stochastic differential equations driven by fractional Brownian motions}

We consider the Wiener space of continuous paths:
\[
\mathbb{W}^{n}=\left( \mathcal{C} ( [0,1] , \mathbb{R}^n
), (\mathcal{B}_t)_{0 \leq t \leq 1}, \mathbb{P} \right)
\]
where:
\begin{enumerate}
\item  $\mathcal{C} ( [0,1] , \mathbb{R}^n
)$ is the space of continuous functions $ [0,1] \rightarrow
\mathbb{R}^n$;
\item  $\left( \beta_{t}\right) _{t\geq 0}$ is the coordinate process defined by
$\beta_{t}(f)=f\left( t\right) $, $f \in \mathcal{C} ( [0,1] ,
\mathbb{R}^n )$;
\item  $\mathbb{P}$ is the Wiener measure;
\item  $(\mathcal{B}_t)_{0 \leq t \leq 1}$ is the ($\mathbb{P}$-completed) natural filtration
of $\left( \beta_{t}\right)_{0 \leq t \leq 1}$.
\end{enumerate}
A $n$-dimensional fractional Brownian motion with Hurst parameter
$H \in (0,1)$ is a Gaussian process
\[
B_t = (B_t^1,\ldots,B_t^n), \text{ } t \geq 0,
\]
where $B^1,\ldots,B^n$ are $n$ independent centered Gaussian
processes with covariance function
\[
R\left( t,s\right) =\frac{1}{2}\left(
s^{2H}+t^{2H}-|t-s|^{2H}\right).
\]
It can be shown that such a process admits a continuous version
whose paths are H\"older $\gamma$ continuous, $\gamma<H$.
Throughout this paper, we will always consider the `regular' case,  $H > 1/2$.
In this case the fractional Brownian motion can be constructed on the Wiener space by a
Volterra type representation (see \cite{du}). Namely, under the Wiener measure, the process
\begin{equation}\label{Volterra:representation}
B_t = \int_0^t {K_H}(t,s) d\beta_s, t \geq 0
\end{equation}
is a fractional Brownian motion with Hurst parameter $H$, where
\begin{equation*}
{K_H} (t, s)=c_H s^{\frac{1}{2} - H} \int_s^t
(u-s)^{H-\frac{3}{2}} u^{H-\frac{1}{2}} du \;,\qquad t>s.
\end{equation*}
and $c_H$ is a suitable constant.

Denote by $\mathcal{E}$ the set of step functions on $[0,1]$. Let $\mathcal{H}$ be the Hilbert space defined as the closure of $\mathcal{E}$ with respect to the scalar product
$$\langle\mathbf{1}_{[0,t]},\mathbf{1}_{[0,s]}\rangle_{\mathcal{H}}=R_H(t,s).$$
The isometry $K_H^*$ from $\mathcal{H}$ to $L^2([0,1])$ is given by
$$(K_H^*\varphi)(s)=\int_s^1\varphi(t)\frac{\partial K_H}{\partial t}(t,s)dt.$$
Moreover, for any $\varphi\in L^2([0,1])$ we have
$$\int_0^1\varphi(s)dB_s=\int_0^1(K_H^*\varphi)(s)d\beta_s.$$  


Let us consider for $x \in \mathbb{R}^n$ the solution $(X_t^x)_{t \ge 0}$ of the stochastic differential equation:
\begin{align}\label{equ}
X_t^x=x+\sum_{i=1}^n \int_0^t V_i (X_s^x)  dB^i_s,
\end{align}
where the $V_i$'s are $C^\infty$ bounded vector fields in $\mathbb{R}^n$.
 Existence and uniqueness of solutions for such equations have widely been  studied and are known to hold in this framework (see for instance \cite{nu-ra}). Moreover, the following bounds were proved by Hu and Nualart as an application of fractional calculus methods.
 \begin{lemma}(Hu-Nualart, \cite{HN})\label{HN}
Consider the stochastic differential equation (\ref{equ}). If the derivatives of $V_i$'s are bounded and H\"{o}lder continuous of order $\lambda> 1/H-1$, then
$$\me\left(\sup_{0\leq t\leq T}|X_t|^p\right)<\infty$$
for all $p\geq 2.$ If furthermore $V_i$'s are bounded and $\me(\exp(\lambda|X_0|^q))<\infty$ for any $\lambda>0$ and $q<2H$, then
$$\me\left(\exp \lambda\left(\sup_{0\leq t\leq T}|X_t|^q\right)\right)<\infty$$
for any $\lambda >0$ and $q<2H$.
\end{lemma}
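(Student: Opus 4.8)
The plan is to prove both assertions by the classical pathwise estimates of fractional calculus (Young integration), which are available because $H>1/2$: all the integrals in \eqref{equ} are Riemann--Stieltjes integrals, and by the existence theory already invoked the solution path $t\mapsto X_t(\omega)$ is, for a.e.\ $\omega$, $\gamma$-Hölder on $[0,T]$ for every $\gamma<H$ (this is where the Hölder continuity of $\nabla V_i$ of order $\lambda>1/H-1$ enters). I would fix a $\gamma\in(1/2,H)$, to be chosen as close to $H$ as needed, write $\|g\|_{\gamma,[s,t]}$ for the $\gamma$-Hölder seminorm of $g$ on $[s,t]$, and set $\|B\|_\gamma:=\max_{1\le i\le n}\|B^i\|_{\gamma,[0,T]}$, which is an a.s.\ finite random variable.

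First I would derive a \emph{local a priori estimate}. From the Young/Z\"ahle bound $\bigl|\int_s^t g\,dh-g_s(h_t-h_s)\bigr|\le C_\gamma\|g\|_{\gamma,[s,t]}\|h\|_{\gamma,[s,t]}(t-s)^{2\gamma}$ applied with $g=V_i(X)$, $h=B^i$, together with the fact that bounded derivatives make $V_i$ Lipschitz (so $\|V_i(X)\|_{\gamma,[s,t]}\le\|\nabla V_i\|_\infty\|X\|_{\gamma,[s,t]}$ and $\sum_i|V_i(y)|\le a+b|y|$ for constants $a,b$ depending on the $V_i$), one gets for $0\le s<t\le T$ an inequality of the form $\|X\|_{\gamma,[s,t]}\le C\|B\|_\gamma(a+b|X_s|)+C\|B\|_\gamma\|\nabla V\|_\infty(t-s)^{\gamma}\|X\|_{\gamma,[s,t]}$. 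As soon as $(t-s)^\gamma\le(2C\|\nabla V\|_\infty\|B\|_\gamma)^{-1}$ the last term is absorbed into the left-hand side, giving $\|X\|_{\gamma,[s,t]}\le2C\|B\|_\gamma(a+b|X_s|)$, hence $\sup_{[s,t]}|X|\le(1+c_1)|X_s|+c_2$ on any such short interval, with $c_1,c_2$ depending only on the $V_i$.

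Then I would iterate over a uniform partition $0=t_0<\cdots<t_N=T$ whose mesh meets the smallness condition above; one checks $N\le1+T(2C\|\nabla V\|_\infty\|B\|_\gamma)^{1/\gamma}$. Telescoping $\sup_{[t_k,t_{k+1}]}|X|\le(1+c_1)|X_{t_k}|+c_2$ over the $N$ steps yields $\sup_{[0,T]}|X|\le(1+c_1)^N(|x|+c_2/c_1)\le C(1+|x|)\exp\!\bigl(c_3(1+\|B\|_\gamma^{1/\gamma})\bigr)$. Since $\|B\|_\gamma$ is the Hölder seminorm of a Gaussian process, Fernique's theorem gives $\mathbb{E}\exp(\epsilon\|B\|_\gamma^2)<\infty$ for some $\epsilon>0$, and because $\gamma>1/2$ implies $1/\gamma<2$ this upgrades to $\mathbb{E}\exp(c\|B\|_\gamma^{1/\gamma})<\infty$ for every $c>0$; inserting this into the previous bound proves $\mathbb{E}(\sup_{[0,T]}|X|^p)<\infty$ for all $p$, which is the first assertion. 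For the second one, boundedness of the $V_i$ replaces $a+b|X_s|$ by the constant $M=\sum_i\|V_i\|_\infty$, so the iteration is merely additive: $\sup_{[0,T]}|X|\le|X_0|+c_4N\le|X_0|+c_4\bigl(1+T(c_5\|B\|_\gamma)^{1/\gamma}\bigr)$, whence $\sup_{[0,T]}|X|^q\le C\bigl(1+|X_0|^q+\|B\|_\gamma^{q/\gamma}\bigr)$. Given $q<2H$ I would pick $\gamma\in(q/2,H)$ so that $q/\gamma<2$, and split $\mathbb{E}\exp(\lambda\sup_{[0,T]}|X|^q)$ by the Cauchy--Schwarz inequality into a factor controlled by $\mathbb{E}\exp(c\lambda|X_0|^q)$, finite by hypothesis since $q<2H$, and a factor controlled by $\mathbb{E}\exp(c\lambda\|B\|_\gamma^{q/\gamma})$, finite by Fernique's theorem since $q/\gamma<2$.

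I expect the main obstacle to be the interplay in the first two steps: one must keep the exact power of $(t-s)$ in the Young bound so that the nonlinear term can be absorbed, and then track carefully that the number of partition intervals needed grows like $\|B\|_\gamma^{1/\gamma}$. The fact that makes the whole scheme succeed --- and that pins down the exponent $2H$ --- is that $\gamma$ may be pushed arbitrarily close to $H$ while remaining above $1/2$: this keeps $1/\gamma<2$, so Fernique's Gaussian tail survives exponentiation, while letting $1/\gamma$ approach $1/H$, so every $q<2H$ is reached and the bound is essentially sharp.
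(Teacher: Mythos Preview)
The paper does not prove this lemma: it is quoted verbatim from Hu--Nualart \cite{HN} and used as a black box, so there is no ``paper's own proof'' to compare against. Your sketch is essentially the argument of the original reference --- pathwise Young/fractional-calculus estimates to control the H\"older seminorm on short intervals, iteration over a partition whose length is polynomial in $\|B\|_\gamma$, and then Fernique's theorem to integrate the resulting exponential of $\|B\|_\gamma^{1/\gamma}$ --- and it is correct in outline; the key observation that $1/\gamma<2$ for $\gamma>1/2$ is exactly what makes the exponential moments finite and fixes the threshold $q<2H$.
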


\

Throughout our discussion, we assume that the following assumption is in force:
\begin{hypothesis}\label{H1}
\ \\
(1) $V_i(x)$'s are bounded smooth vector fields on $\mathbb{R}^n$ with bounded derivatives at any order. 

(2) For every $x \in \mathbb{R}^n$, $(V_1(x),\cdots,V_n(x))$ is a basis of $\mathbb{R}^n$. 

\end{hypothesis}

Therefore, in this framework, we can find functions
$\omega^k_{ij}$ such that
\begin{align}\label{omega}
[V_i,V_j]=\sum_{k=1}^n \omega_{ij}^k V_k,
\end{align}
where the $\omega_{ij}^k$'s are bounded smooth functions on $\mr^n$ with bounded derivatives at any order.

\section{Integration by parts formulas}

We first introduce notations and basic relations for the purpose of our discussion.
Consider the diffeomorphism $\Phi(t,x)=X^x_t:\mr^n\rightarrow\mr^n$. Denote by 
$\mathbf{J_t}=\frac{\partial X^x_t}{\partial x}$
the Jacobian of $\Phi(t,\cdot)$. It is standard (see \cite{nu-sa} for details) that
\begin{align}\label{equ-J}
d\mbj_t=\sum_{i=1}^n\partial V_i(X^x_t)\mbj_t dB^i_t,\quad\quad\quad\mathrm{with}\ \ \mbj_0=\mathbf{I},
\end{align}
and
\begin{align}\label{equ-J-1}
d\mbj^{-1}_t=-\sum_{i=1}^n\mbj^{-1}_t\partial V_i(X^x_t)dB^i_t,\quad\quad\quad\mathrm{with}\ \ \mbj^{-1}_0=\mathbf{I}.
\end{align}
For any $C^\infty_b$ vector field $W$ on $\mr^n$, we have that
$$({\Phi_t}_*W)(X_t^x)=\mbj_tW(x),\quad\quad\mathrm{and}\quad({\Phi_t}^{-1}_*W)(x)=\mbj^{-1}_tW(X_t^x).$$
Here ${\Phi_t}_*$ is the push-forward operator with respect to the diffeomorphism $\Phi(t,x):\mr^n\rightarrow\mr^n.$
Introduce the non-degenerate $n\times n$ matrix value process \begin{align}\label{alpha}\alpha(t,x)=(\alpha^i_j(t,x))_{i,j=1}^n\end{align} by
\begin{align*}
({\Phi_t}_*V_i)(X^x_t)=\mbj_t(V_i(x))=\sum_{k=1}^n \alpha^i_k(t,x)V_k(X_t^x)\quad\quad\quad i=1,2,...,n.
\end{align*}
Note that $\alpha(t,x)$ is non-degenerate since we assume $V_i$'s form a basis at each point $x\in\mr^n$. Denote by \begin{align}\label{beta}\beta(t,x)=\alpha^{-1}(t,x).\end{align}
Clearly we have
\begin{align}\label{rel-beta}({\Phi_t}_*^{-1}V_i(X_t^x))(x)=\mbj_t^{-1}V_i(X_t^x)(x)=\sum_{k=1}^n \beta^i_k(t,x)V_k(x)\quad\quad\quad i=1,2,...,n.\end{align}

\begin{lemma}\label{th: equ-alpha-beta}
Let $\alpha(t,x)$ and $\beta(t,x)$ be as above, we have
\begin{align}\label{equ-alpha}
d\alpha^i_j(t,x)=-\sum_{k,l=1}^n\alpha^i_k(t,x)\omega^j_{lk}(X_t^x)dB^{l}_t,\quad\quad\mathrm{with}\quad \alpha^i_j(0,x)=\delta^i_j;
\end{align}
and
\begin{align}\label{equ-beta}
d\beta^i_j(t,x)=\sum_{k,l=1}^n\omega^k_{li}(X_t^x)\beta^k_j(t,x)dB^l_t,\quad\quad\mathrm{with}\quad\quad \beta^i_j(0,x)=\delta^i_j.
\end{align}
\end{lemma}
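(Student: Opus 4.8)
The strategy is to differentiate the defining relations for $\alpha$ and $\beta$ and use the known SDEs for $\mbj_t$, $\mbj_t^{-1}$, together with the structure equations \eqref{omega} for the Lie brackets $[V_i,V_j]$. Let me sketch this.

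First I would start from the relation $\mbj_t V_i(x) = \sum_{k} \alpha^i_k(t,x) V_k(X_t^x)$. The left side is a process with $x$ fixed, so I apply the It\^o/Stratonovich product rule (in the Young/fBm sense, since $H>1/2$ everything is a genuine Young integral and ordinary calculus rules apply) to both sides. On the left, using \eqref{equ-J}, $d(\mbj_t V_i(x)) = \sum_l \partial V_l(X_t^x)\,\mbj_t V_i(x)\,dB_t^l = \sum_l \partial V_l(X_t^x)\left(\sum_k \alpha^i_k V_k(X_t^x)\right) dB_t^l$. On the right, I differentiate the product $\alpha^i_k(t,x) V_k(X_t^x)$: there is the term $\sum_k (d\alpha^i_k) V_k(X_t^x)$ plus the term $\sum_k \alpha^i_k\, d(V_k(X_t^x)) = \sum_{k,l} \alpha^i_k\, \partial V_l(X_t^x) V_k(X_t^x)\, dB_t^l$ coming from \eqref{equ}. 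Matching the two expressions, the terms $\sum_{k,l}\alpha^i_k \partial V_l(X_t^x) V_k(X_t^x) dB_t^l$ on each side do not immediately cancel — rather, I should recognize that $\partial V_l \cdot V_k - \partial V_k \cdot V_l = [V_k,V_l]$, but here we only get $\partial V_l \cdot V_k$ on both sides, so in fact those terms \emph{do} cancel directly, leaving $\sum_k (d\alpha^i_k) V_k(X_t^x) = 0$?? That cannot be right, so the cancellation must be more subtle: the left-hand side expression $\partial V_l(X_t^x)(V_k(X_t^x))$ is the derivative of $V_l$ in the direction $V_k$, whereas the right-hand side push-forward correction is $\partial(V_k)(X_t^x)$ acting appropriately. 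The correct bookkeeping is that $d(W(X_t^x)) $ for a vector field $W$ is $\sum_l (\partial W)(X_t^x) V_l(X_t^x) dB^l_t$, and the mismatch between $\partial V_l \cdot V_k$ and the push-forward of $V_k$ under the flow generated by $V_l$ produces exactly a Lie bracket $[V_l, V_k]$, which by \eqref{omega} equals $\sum_j \omega_{lk}^j V_j$. Collecting, one obtains $\sum_j (d\alpha^i_j) V_j(X_t^x) = -\sum_{j,k,l} \alpha^i_k\, \omega^j_{lk}(X_t^x) V_j(X_t^x)\, dB_t^l$, and since $(V_j(X_t^x))_j$ is a basis (Hypothesis \ref{H1}(2)), I may equate coefficients to get \eqref{equ-alpha}. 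The initial condition $\alpha^i_j(0,x)=\delta^i_j$ is immediate from $\mbj_0 = \mathbf{I}$.

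For \eqref{equ-beta}, I would argue analogously starting from \eqref{rel-beta}, $\mbj_t^{-1} V_i(X_t^x) = \sum_k \beta^i_k(t,x) V_k(x)$, differentiating both sides using \eqref{equ-J-1} for $d\mbj_t^{-1}$ and \eqref{equ} for $d(V_i(X_t^x))$ on the left, while the right-hand side has only the $\sum_k (d\beta^i_k) V_k(x)$ term since $V_k(x)$ is constant in $t$. Again the first-order terms combine into Lie brackets via \eqref{omega}, and since $(V_k(x))_k$ is a basis I equate coefficients. Alternatively — and this is probably the cleaner route — I would simply use $\beta = \alpha^{-1}$ together with \eqref{equ-alpha} and the matrix identity $d(\alpha^{-1}) = -\alpha^{-1}(d\alpha)\alpha^{-1}$ (valid here because $H>1/2$ gives ordinary Young-integral calculus with no bracket corrections), and then verify that the resulting expression matches the claimed form \eqref{equ-beta} after relabeling indices; this reduces the proof of the second identity to an algebraic manipulation.

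The main obstacle, and the step requiring genuine care, is the correct identification of the Lie-bracket terms when differentiating the composition $V_k(X_t^x)$ and reconciling it with the push-forward action of the Jacobian $\mbj_t$ — i.e., getting the index placement and signs in $\omega^j_{lk}$ exactly right, and being sure which of the two relations ($[V_l,V_k]$ versus $[V_k,V_l]$, hence a sign) appears. Everything analytic is routine in the Young regime $H>1/2$: the solutions to \eqref{equ-J}, \eqref{equ-J-1} exist with moments by Lemma \ref{HN}-type estimates, and no stochastic-calculus correction terms arise, so the whole argument is effectively the deterministic computation one would do for an ODE flow, carried out pathwise.
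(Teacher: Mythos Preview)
Your approach is correct and is essentially the paper's argument: differentiate the defining relation, identify the first-order terms as a Lie bracket, invoke \eqref{omega}, and read off coefficients using Hypothesis~\ref{H1}(2). The confusion in your middle paragraph is caused by an index slip in the chain rule: one has $d(V_k(X_t^x)) = \sum_l \partial V_k(X_t^x)\,V_l(X_t^x)\,dB^l_t$, not $\partial V_l\cdot V_k$ as you wrote; once this is corrected, the left and right sides differ precisely by $\partial V_l\cdot V_k - \partial V_k\cdot V_l = [V_k,V_l]$ and the bracket drops out immediately, so the hedged self-correction that follows is unnecessary. The paper runs the computation in the reverse order---it derives \eqref{equ-beta} first by differentiating $\mbj_t^{-1}V_i(X_t^x)$ (marginally cleaner, since on that side $V_k(x)$ is constant in $t$) and then recovers \eqref{equ-alpha} from $\alpha=\beta^{-1}$---but this is only a cosmetic difference from your plan.
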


\begin{proof}
The initial values are apparent by the definition of $\alpha$ and $\beta$. We show how to derive equation (\ref{equ-beta}). Once the equation for $\beta(t,x)$ is obtained, it is standard to obtain that  $\alpha(t,x)=\beta^{-1}(t,x)$. 

Consider the $n\times n$ matrix $V=(V_1,V_2,...,V_n)=(V^i_j)$ obtained from the vector fields $V$.
Let $W$ be the inverse matrix of $V$. 
It is not hard to see we have
$$\beta^i_j(t,x)=\sum_{k=1}^nW^j_k(x)(\mbj_t^{-1}V_i(X_t^x))^k(x).$$
By the equation for $X_t^x$,  relation (\ref{omega}), equation (\ref{equ-J-1}),  and It\^{o}'s formula, we obtain
\begin{align*}
d(\mbj^{-1}_tV_i(X_t^x))(x)&=\sum_{k=1}^n(\mbj^{-1}_t[V_k,V_i](X_t^x))(x)dB_t^k\\
&=\sum_{k,l=1}^n\omega^l_{ki}(X_t^x)(\mbj^{-1}_tV_l(X_t^x))(x)dB_t^k.
\end{align*}
Hence
\begin{align*}
d\beta^i_j(t,x)=\sum_{k,l=1}^n\omega^l_{ki}(X_t^x)\beta^l_j(t,x)dB^k_t.
\end{align*}
This completes our proof.
\end{proof}

Define now $h_i(t,x): [0,1]\times\mr^n\rightarrow \mathcal{H}$ by
\begin{align}\label{def-h}
h_i(t,x)=(\beta^k_i(s,x)\mathbb{I}_{[0,t]}(s))_{k=1,...,n},\quad\quad i=1,...,n. \end{align}

Introduce $M_{i,j}(t,x)$  given by
\begin{align}\label{def-M}
M_{i,j}(t,x)=\frac{1}{t^{2H}}\langle h_i(t,x), h_j(t,x)\rangle_{\mathcal {H}}.
\end{align}

For each $t\in [0,1]$, consider the semi-norms
$$\|f\|_{\gamma,t}:=\sup_{0\leq v<u\leq t}\frac{|f(u)-f(v)|}{(u-v)^\gamma}.$$
The semi-norm $\|f\|_{\gamma, 1}$ will simply be denoted by $\|f\|_\gamma$.

We have the following two important estimates.
\begin{lemma}\label{th: est-ab}
Let $\alpha(t,x)$, $\beta(t,x)$ and $h_i(t,x)$ be as above.  We have:


(1) For any multi-index $\nu$, integers $k, p\geq 1$, there exists a constant $C_{k,p}(x)>0$ depending on $k, p$ and $x$ such that for all $x\in \mr^n$
$$\sup_{0\leq t\leq 1}\left\|\frac{\partial^{|\nu|}}{\partial x^\nu}\alpha(t,x)\right\|_{k,p}<C_{k,p}(x),\quad \sup_{0\leq t\leq 1}\left\|\frac{\partial^{|\nu|}}{\partial x^\nu}\beta(t,x)\right\|_{k,p}<C_{k,p}(x).$$

(2) For all integers $k, p\geq 1$, $\delta h_i(t,x)\in\mathbb{D}^{k,p}$.  Moreover, there exists a constant $C_{k,p}(x)$ depending on $k, p$ and $x$ such that
$$\|\delta h_i(t,x)\|_{k,p}<C_{k,p}(x)t^{H},\quad\quad t\in[0,1].$$
In the above $\delta$ is the adjoint operator of $\mathbf{D}$.
\end{lemma}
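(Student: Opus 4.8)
The plan is to establish the two assertions in turn, the first furnishing the analytic input needed for the second. For assertion (1) I would start from the fact that $\alpha(t,x)$ and $\beta(t,x)$ solve the \emph{linear} equations (\ref{equ-alpha}) and (\ref{equ-beta}), whose coefficients are the maps $\omega^k_{lj}(X_t^x)$, i.e.\ functions which are smooth and bounded together with all of their derivatives, composed with the diffusion $X_t^x$. The first step is to recall (from \cite{nu-sa}, together with the moment and exponential integrability estimates of Lemma \ref{HN}) that $X_t^x$, the Jacobian $\mbj_t$, its inverse $\mbj_t^{-1}$ and all of their spatial derivatives belong to $\mathbb{D}^{k,p}$ for every $k,p\geq 1$, with $\mathbb{D}^{k,p}$--norms bounded uniformly for $t\in[0,1]$ by a constant depending on $x$; in particular $\omega^k_{lj}(X_t^x)$ has the same property. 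The second step is the bound for $\beta$ itself: the Young (linear O.D.E.) estimate gives a pathwise bound $\sup_{t\in[0,1]}|\beta(t,x)|\leq\exp\!\big(c\|B\|_{\gamma}^{1/\gamma}\big)$ for $\tfrac12<\gamma<H$, and combining it with the Fernique-type integrability of the fractional Brownian H\"older norm $\|B\|_{\gamma}$ yields all $L^p$ moments of $\sup_{t}|\beta(t,x)|$. The third step is differentiation: differentiating (\ref{equ-beta}) in the Malliavin sense shows that $\mathbf{D}\beta$ solves an inhomogeneous linear equation whose forcing term is assembled from $\beta$, from $\mathbf{D}X^x$ and from bounded diagonal terms coming from differentiating the stochastic integral, all of which are already under control, so the same estimate gives $L^p(\Omega;\mathcal{H})$ bounds on $\mathbf{D}\beta$; iterating controls all higher Malliavin derivatives, still with $\sup$-in-$t$ bounds. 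Finally, the spatial derivatives are treated by induction on $|\nu|$: $\partial_x^\nu\beta$ solves a linear equation with forcing term built from $\partial_x^{\nu'}\beta$, $|\nu'|<|\nu|$, and from the spatial derivatives of $X_t^x$, so the previous estimates propagate uniformly in $t\in[0,1]$; since $\alpha$ solves the analogous equation (\ref{equ-alpha}), the same argument applies to it verbatim.

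For assertion (2) I would invoke Meyer's inequalities, which make the divergence $\delta$ a continuous operator from $\mathbb{D}^{k+1,p}(\mathcal{H})$ into $\mathbb{D}^{k,p}$; this gives $\delta h_i(t,x)\in\mathbb{D}^{k,p}$ and $\|\delta h_i(t,x)\|_{k,p}\leq c_{k,p}\|h_i(t,x)\|_{\mathbb{D}^{k+1,p}(\mathcal{H})}$, so the whole matter reduces to showing $\|h_i(t,x)\|_{\mathbb{D}^{k+1,p}(\mathcal{H})}\leq C_{k,p}(x)\,t^{H}$. Up to a constant this norm equals $\big(\me\sum_{m=0}^{k+1}\|\mathbf{D}^m h_i(t,x)\|_{\mathcal{H}^{\otimes(m+1)}}^p\big)^{1/p}$, and from $\mathbf{D}^m h_i(t,x)=\big(\mathbf{D}^m\beta^l_i(\cdot,x)\mathbb{I}_{[0,t]}(\cdot)\big)_{l=1,\dots,n}$, the representation $\langle f,g\rangle_{\mathcal{H}}=H(2H-1)\int_0^1\!\!\int_0^1 f(u)g(v)|u-v|^{2H-2}\,du\,dv$ valid for $H>\tfrac12$, and the elementary identity $\int_0^t\!\!\int_0^t|u-v|^{2H-2}\,du\,dv=t^{2H}/\big(H(2H-1)\big)$, one obtains
\[
\|\mathbf{D}^m h_i(t,x)\|_{\mathcal{H}^{\otimes(m+1)}}^{2}\ \leq\ t^{2H}\sum_{l=1}^{n}\Big(\sup_{0\leq s\leq 1}\|\mathbf{D}^m\beta^l_i(s,x)\|_{\mathcal{H}^{\otimes m}}\Big)^{2}.
\]
Taking $L^p(\Omega)$ norms, and using that the method of assertion (1) also controls $\sup_{0\leq s\leq1}\|\mathbf{D}^m\beta^l_i(s,x)\|_{\mathcal{H}^{\otimes m}}$ in every $L^p(\Omega)$, one gets $\|\mathbf{D}^m h_i(t,x)\|_{L^p(\Omega;\mathcal{H}^{\otimes(m+1)})}\leq C_{k,p}(x)\,t^{H}$ for $0\leq m\leq k+1$, and summing over $m$ gives the claim.

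I expect the main obstacle to lie in assertion (1): one has to check that the linear--equation estimates genuinely survive being iterated through both Malliavin and spatial differentiation while keeping the uniform--in--$t$ control, which is precisely where the exponential integrability of the fractional Brownian H\"older norm (via Lemma \ref{HN}) has to be used carefully. Once (1) is secured, assertion (2) is a routine combination of Meyer's inequalities with the $H>\tfrac12$ estimate $\|g\,\mathbb{I}_{[0,t]}\|_{\mathcal{H}}\leq t^{H}\sup_{0\le s\le t}|g(s)|$, the latter being the source of the $t^{H}$ rate.
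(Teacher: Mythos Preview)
Your argument for (1) is an accurate elaboration of what the paper dispatches in one line (``follows from equations (\ref{equ-alpha}), (\ref{equ-beta}) and Lemma \ref{HN}''): the linear Young--equation estimate plus Fernique integrability of $\|B\|_\gamma$ gives the base case, and iterating through Malliavin and spatial derivatives is exactly the intended mechanism.

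For (2) your route is genuinely different from the paper's. The paper writes out the divergence explicitly as
\[
\delta h_i(t,x)=\int_0^t \beta_i(u,x)\,dB_u-\alpha_H\int_0^t\!\!\int_0^t \mathbf{D}_u\beta_i(v,x)\,|u-v|^{2H-2}\,du\,dv,
\]
and then extracts the $t^H$ rate from a pathwise Young--integral H\"older estimate on the first term (the trace term being $O(t^{2H})$). You instead invoke Meyer's inequality to reduce to $\|h_i(t,x)\|_{\mathbb{D}^{k+1,p}(\mathcal{H})}$ and get the $t^H$ directly from $\|g\,\mathbb{I}_{[0,t]}\|_{\mathcal{H}}\le t^H\sup_{s\le t}|g(s)|$, applied to $g=\mathbf{D}^m\beta_i$ for each $m$. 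Your approach is cleaner and avoids tracking the Young integral through repeated Malliavin differentiation; it does, however, require the slightly stronger input $\mathbb{E}\big[\sup_{s\le 1}\|\mathbf{D}^m\beta_i(s,x)\|_{\mathcal{H}^{\otimes m}}^p\big]<\infty$ (sup inside the expectation) rather than the $\sup_t\|\cdot\|_{k,p}$ stated in (1). You correctly note that the pathwise method of (1) delivers this stronger estimate as well, so there is no gap. The paper's approach, by contrast, works more directly at the level of $\delta h_i$ itself and makes the stochastic--integral structure visible, which is closer in spirit to the later computations in Theorem \ref{th: IBP}.
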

\begin{proof}
The result in (1) follows from equation (\ref{equ-alpha}), (\ref{equ-beta}) and Lemma \ref{HN}. In what follows, we show (2). Note that we have (c.f. Nualart\cite{Nu06}) 
\begin{align*}\delta h_i(t,x)&=\int_0^1 h_i(t,x)_udB_u-\alpha_H\int_0^1\int_0^1\mathbf{D}_uh(t,x)_v|u-v|^{2H-2}dudv\\
&=\int_0^t \beta_i(u,x)dB_u-\alpha_H\int_0^t\int_0^t\mathbf{D}_u\beta_i(v,x)|u-v|^{2H-2}dudv.
\end{align*}
Here $\alpha_H=H(2H-1)$. From the above representation of $\delta h_i$ and the result in (1), it follows immediately that $\delta h_i(t,x)\in \mathbb{D}^{k,p}$ for all integers $k,p\geq 1$.  To show
$$\|\delta h_i(t,x)\|_{k,p}<C_{k,p}(x)t^H\quad\quad\mathrm{ for\ all}\ t\in[0,1],$$
it suffices to prove
$$\left|\int_0^t\beta_i(u,x)dB_u\right|\leq C(x)t^H\quad\quad t\in[0,1].$$
Here $C(x)$ is a random variable in $L^p(\mp)$.
Indeed, by standard estimate, we have 
$$\left\|\int_0^\cdot (\beta_i(u,x)-\beta_i(0,x))dB_u\right\|_{\gamma, t}\leq C\|\beta(\cdot,x)\|_{\tau,t}\|B\|_{\gamma,t},\quad\quad t\in[0,1].$$
In the above $ \frac{1}{2}<\tau, \gamma<H$ and $\tau+\gamma>1$, and $C>0$ is a constant only depending on $\gamma$. Therefore
$$\left|\int_0^t \beta_i(u,x)dB_u\right|\leq C\|\beta(\cdot,x)\|_{\tau,t}\|B\|_{\gamma,t}t^\gamma+|\beta(0,x)||B_t|,\quad\quad t\in[0,1].$$
Together with the fact that for any $\tau<H$, there exists a random variable $G_\tau(x)$ in $L^p(\mp)$ for all $p>1$ such that
$$|\beta(t,x)-\beta(s,x)|< G_\tau(x)|t-s|^\tau,$$


the proof is now completed. 
\end{proof}

\begin{lemma}\label{th: est-M}
Let $M(t,x)=(M_{i,j}(t,x))$ be given in (\ref{def-M}). We have for all $p\geq 1$,
$$\sup_{t\in[0,1]}\me \left[ \det (M(t,x))^{-p} \right] <\infty.$$
\end{lemma}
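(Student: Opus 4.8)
The plan is to show that the matrix $M(t,x)$, which is a scaled Gram matrix, is uniformly nondegenerate in $L^p$ as $t$ ranges over $[0,1]$, by reducing the problem to the behavior near $t=0$ and there exploiting the explicit structure of $h_i$ together with the fact that $\beta(0,x)=\mathbf{I}$. First I would observe that $\det M(t,x) > 0$ almost surely for each fixed $t \in (0,1]$: indeed $h_1(t,x),\dots,h_n(t,x)$ are, up to the factor $t^{-H}$, the elements $(\beta^k_i(s,x)\mathbb{I}_{[0,t]}(s))_k$ of $\mathcal{H}$, and since $\beta(s,x)$ is an invertible matrix for every $s$ (being the inverse of $\alpha$), the vectors $h_i(t,x)$ are linearly independent in $\mathcal{H}$; hence $M(t,x)$ is symmetric positive definite and $\det M(t,x)>0$. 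So the only issue is integrability of the inverse determinant, uniformly in $t$, and the only place this can fail is as $t\to 0$, where the scaling $t^{-2H}$ must exactly compensate the degeneration of $\langle h_i,h_j\rangle_{\mathcal H}$.

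Next I would compute the small-$t$ asymptotics of $M_{i,j}(t,x)$. Using the representation $\langle \mathbb{I}_{[0,t]}g, \mathbb{I}_{[0,t]}g'\rangle_{\mathcal H} = \alpha_H \int_0^t\int_0^t g(u)g'(v)|u-v|^{2H-2}\,du\,dv$ for $H>1/2$, one has
\[
M_{i,j}(t,x) = \frac{\alpha_H}{t^{2H}} \sum_{k=1}^n \int_0^t\int_0^t \beta^k_i(u,x)\,\beta^k_j(v,x)\,|u-v|^{2H-2}\,du\,dv.
\]
Since $\beta(0,x)=\mathbf{I}$ and, by the Hölder estimate from Lemma \ref{th: est-ab}(2) (more precisely the bound $|\beta(u,x)-\beta(0,x)|\le G_\tau(x)u^\tau$), we have $\beta^k_i(u,x) = \delta^k_i + O(G_\tau(x)\,u^\tau)$ uniformly for $u\in[0,t]$, substituting into the double integral and using $\frac{\alpha_H}{t^{2H}}\int_0^t\int_0^t |u-v|^{2H-2}\,du\,dv = 1$ gives
\[
M_{i,j}(t,x) = \delta_{i,j} + R_{i,j}(t,x), \qquad |R_{i,j}(t,x)| \le C\, G_\tau(x)\, t^\tau
\]
for a deterministic constant $C$, where the remainder is controlled because the extra factors of $u^\tau$ or $v^\tau$ improve the homogeneity of the integral by $t^\tau$. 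Consequently $M(t,x) \to \mathbf{I}$ as $t\to 0$, and more quantitatively $\|M(t,x)-\mathbf{I}\| \le C G_\tau(x) t^\tau$ with $G_\tau(x)\in L^p$ for all $p$.

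From here the conclusion is routine. Fix $p\ge 1$. On the event $\{G_\tau(x) t^\tau \le \tfrac{1}{2C n}\}$ we have $\|M(t,x)-\mathbf{I}\|\le 1/2$, hence all eigenvalues of $M(t,x)$ lie in $[1/2,3/2]$ and $\det M(t,x)^{-1}\le 2^n$, so this part contributes at most $2^{np}$ to the expectation, uniformly in $t$. On the complementary event, which forces $t$ to be bounded below by a positive number $t_0(x)$ depending on the realization of $G_\tau(x)$ (quantitatively $t^\tau > \tfrac{1}{2CnG_\tau(x)}$), I would use the general fact that for fixed $t$ bounded away from $0$, $\det M(t,x)^{-p}$ has finite expectation — this follows from the nondegeneracy noted above combined with Lemma \ref{th: est-ab}(1), which gives uniform Sobolev control of $\beta$, hence of $M$, so that $M(t,x)^{-1}$ is controlled by negative moments of $\det M$ via standard Malliavin-calculus estimates (Cramér's rule plus Hölder); alternatively one bounds $\det M(t,x)^{-1}$ crudely by a negative power of $\lambda_{\min}(M(t,x))$ and uses the lower bound on $t$ together with the $L^p$ moments of $G_\tau(x)$ and the Cauchy--Schwarz inequality to split the expectation over the two events. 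The main obstacle is making the second step — the small-$t$ expansion of $M_{i,j}(t,x)$ and the uniform control of the remainder $R_{i,j}(t,x)$ in $L^p$ — fully rigorous, since one must handle the double integral against the singular kernel $|u-v|^{2H-2}$ carefully and track the dependence of all constants on $x$; but Lemma \ref{th: est-ab} has been set up precisely to supply the needed pathwise Hölder bounds with $L^p$ random constants, so this is a matter of careful bookkeeping rather than a new idea.
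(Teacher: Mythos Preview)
Your small-$t$ expansion is correct and nicely done: writing $\beta^k_i(u,x)=\delta^k_i+O(G_\tau u^\tau)$ and integrating against $|u-v|^{2H-2}$ does give $M(t,x)=\mathbf I+R(t,x)$ with $\|R(t,x)\|\le C(G_\tau+G_\tau^2)t^\tau$, so on the event where this is small the determinant is bounded below by $2^{-n}$. The gap is in the complementary event. There you need $\mathbb E[\det M(t,x)^{-p}\mathbf 1_B]<\infty$, and both of the arguments you sketch are circular: Cram\'er's rule expresses $M^{-1}$ in terms of $(\det M)^{-1}$, which is the very quantity you are trying to control, and the ``lower bound on $t$'' gives you nothing for $\lambda_{\min}(M(t,x))$ because $M(t,x)$ can be arbitrarily close to singular on $B$ --- almost-sure positivity of the Gram determinant does not imply negative moments. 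Even for a single fixed $t>0$, the finiteness of $\mathbb E[\det M(t,x)^{-p}]$ is a nontrivial statement equivalent to nondegeneracy of the Malliavin matrix of $X_t^x$, and that is precisely the deep input you are missing.

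The paper does not try to argue directly. It observes that $M$ is algebraically tied to the Malliavin matrix $\Gamma(t,x)$ of $X_t^x$: writing $\gamma(t,x)=\mathbf J_t^{-1}\Gamma(t,x)(\mathbf J_t^{-1})^T$ and using $\mathbf D_s^k X_t=\mathbf J_t\mathbf J_s^{-1}V_k(X_s)$ together with the definition of $\beta$ gives the identity
\[
V(x)\,M(t,x)\,V(x)^T \;=\; \mathbf J_t^{-1}\,\frac{\Gamma(t,x)}{t^{2H}}\,(\mathbf J_t^{-1})^T.
\]
Then $\det M(t,x)^{-1}$ is controlled by $\det(\Gamma(t,x)/t^{2H})^{-1}$, $\det\mathbf J_t$, and $\det V(x)$; the uniform-in-$t$ negative moments of $\det(\Gamma/t^{2H})$ are the known result from \cite{ba-ha,HN,nu-sa}, and the $\mathbf J_t$ moments come from Lemma~\ref{HN}. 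So the proof reduces to one algebraic identity plus a citation, and the citation absorbs exactly the step your argument cannot supply.
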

\begin{proof}
Denote the Malliavin matrix of $X_t^x$ by $\Gamma(t,x)$. By definition
\begin{align*}
\Gamma_{i,j}(t,x)=\langle \mathbf{D}_sX_t^i, \mathbf{D}_sX_t^j\rangle_{\mathcal{H}}=\alpha_H\int_0^t\int_0^t \mathbf{D}_uX_t^i \mathbf{D}_vX_t^j|u-v|^{2H-2}dudv.
\end{align*}
It can be shown that for all $p>1$ (cf. Baudoin-Hairer \cite{ba-ha}, Hu-Nualart \cite{HN} and Nualart-Saussereau \cite{nu-sa}),
\begin{align}\label{est-Gamma}
\sup_{t\in[0,1]}\me\left(\det \frac{\Gamma(t,x)}{t^{2H}}\right)^{-p}<\infty.
\end{align}

Introduce $\gamma$ by
$$\gamma_{i,j}(t,x)=\alpha_H\int_0^t\int_0^t\sum_{k=1}^n\left(\mathbf{J}_u^{-1}V_k(X_u)\right)^i\left(\mathbf{J}_v^{-1}V_k(X_v)\right)^j|u-v|^{2H-2}dudv.$$
Since $\mathbf{D}_s^kX_t=\mathbf{J}_t \mathbf{J}_s^{-1}V_k(X_s)$, we obtain
\begin{align}\label{rep-Gamma}
\Gamma(t,x)=\mathbf{J}_t\gamma(t,x) \mathbf{J}_t^T.
\end{align}

Recall
\begin{align*}
M_{i,j}(t,x)=\frac{1}{t^{2H}}\langle h_i(t,x), h_j(t,x)\rangle_{\mathcal {H}},
\end{align*}
where
\begin{align*}
h_i(t,x)=(\beta^k_i(s,x)\mathbb{I}_{[0,t]}(s))_{k=1,...,n},\quad\quad i=1,...,n. 
\end{align*}
By (\ref{rel-beta}) and (\ref{rep-Gamma}), we have
\begin{align}\label{rep-M}
V(x)M(t,x)V(x)^T=\frac{1}{t^{2H}} \gamma(t,x)=\mathbf{J}^{-1}\frac{\Gamma(t,x)}{t^{2H}}(\mathbf{J}^{-1})^T.
\end{align}
Finally, by equation (\ref{equ-J}), Lemma \ref{HN} and estimate (\ref{est-Gamma}) we have for all $p\geq 1$
$$\sup_{t\in[0,1]}\me \left[ \det (M_{i,j})^{-p} \right]<\infty,$$
which is the desired result.
\end{proof}

The following definition is inspired by Kusuoka \cite{Kusuoka}.
\begin{definition}
Let $H$ be a separable real Hilbert space and $r\in \mr$ be any real number. Introduce $\mathcal{K}_r(H)$ the set of mappings $\Phi(t,x): (0,1]\times\mr^n\rightarrow \mathbb{D}^\infty(H)$ satisfying:

(1) $\Phi(t,x)$ is smooth in $x$ and $\frac{\partial^\nu\Phi}{\partial x^\nu}(t,x)$ is continues in $(t,x)\in(0,1]\times\mr^n$ with probability one for any multi-index $\nu$;

(2) For any $n, p>1$ we have
$$\sup_{0< t\leq 1}t^{-rH}\left\|\frac{\partial^\nu \Phi}{\partial^\nu x}(t,x)\right\|_{\mathbb{D}^{k,p}(H)}<\infty.$$
\end{definition}
We denote $\mathcal{K}_r(\mr)$ by $\mathcal{K}_r$.

\begin{lemma}\label{th: K}
With probability one, we have

(1) $\alpha(t,x), \beta(t,x)\in \mathcal{K}_0$;

(2) $\delta h_i(t,x)\in \mathcal{K}_1$;

(3) Let $(M^{-1}_{i,j})$ be the inverse matrix of $(M_{i,j})$. Then $M^{-1}_{i,j}\in \mathcal{K}_0$ for all $i,j=1,...,n$.
\end{lemma}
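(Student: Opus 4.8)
The plan is to verify the two defining properties of $\mathcal{K}_r$ for each object, relying entirely on the quantitative estimates already established in Lemmas \ref{th: est-ab} and \ref{th: est-M}. Property (1) in the definition—smoothness in $x$ and joint continuity of the $x$-derivatives in $(t,x)$—follows in all three cases from the fact that $\alpha(t,x)$ and $\beta(t,x)$ solve the linear stochastic differential equations \eqref{equ-alpha} and \eqref{equ-beta} with smooth coefficients depending on $X_t^x$, so that $X_t^x$, $\mathbf{J}_t$, $\mathbf{J}_t^{-1}$, $\alpha$, $\beta$ and all their $x$-derivatives have continuous modifications jointly in $(t,x)$ by the standard Kolmogorov argument applied to the flow of \eqref{equ}, \eqref{equ-J}, \eqref{equ-J-1}; I would cite \cite{nu-sa} for this. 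For $\delta h_i$ one uses the explicit representation of $\delta h_i(t,x)$ given in the proof of Lemma \ref{th: est-ab}, which is built from $\beta$ and its Malliavin derivative, hence inherits the same regularity. For $M^{-1}$ one notes that $M_{i,j}(t,x)$ is a smooth function of the $\mathcal{H}$-inner products of the $h_i$'s, which in turn are integrals of the $\beta^k_i$'s, and $M^{-1} = (\det M)^{-1}\,\mathrm{adj}(M)$, so smoothness and joint continuity in $(t,x)$ on $(0,1]\times\mr^n$ follow once we know $\det M(t,x)$ does not vanish—which it does not, since $M(t,x)$ is, up to the conjugation \eqref{rep-M} by the invertible matrices $V(x)$ and $\mathbf{J}_t$, a positive multiple of the Malliavin matrix $\Gamma(t,x)/t^{2H}$.

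For property (2), the homogeneity index $r$, the three cases are read off directly. For (1), Lemma \ref{th: est-ab}(1) gives $\sup_{0\le t\le 1}\|\partial^\nu_x\alpha(t,x)\|_{k,p}<\infty$ and likewise for $\beta$, for every multi-index $\nu$ and all $k,p\ge 1$; since $t^{-0\cdot H}=1$, this is exactly the statement $\alpha,\beta\in\mathcal{K}_0$. For (2), Lemma \ref{th: est-ab}(2) gives $\|\delta h_i(t,x)\|_{k,p}<C_{k,p}(x)\,t^H$, i.e. $\sup_{0<t\le 1}t^{-H}\|\delta h_i(t,x)\|_{k,p}<\infty$; to get the same bound for every $x$-derivative $\partial^\nu_x\delta h_i(t,x)$ one differentiates the explicit representation of $\delta h_i$ under the integral sign, obtaining an expression in $\partial^\nu_x\beta$ and $\partial^\nu_x\mathbf{D}\beta$, and then reruns the pathwise Young/rough-path estimate of Lemma \ref{th: est-ab}(2) verbatim with $\beta$ replaced by $\partial^\nu_x\beta$—the key point being that the factor $t^\gamma$ (with $\gamma<H$ arbitrarily close to $H$) producing the $t^H$ scaling is structural and survives differentiation in $x$. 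Hence $\delta h_i\in\mathcal{K}_1$.

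The remaining case, $M^{-1}\in\mathcal{K}_0$, is where the only real work lies, and I expect it to be the main obstacle. One must bound $\|\partial^\nu_x M^{-1}_{i,j}(t,x)\|_{\mathbb{D}^{k,p}}$ uniformly in $t\in(0,1]$. Writing $M^{-1}=(\det M)^{-1}\mathrm{adj}(M)$ and applying the Leibniz rule for $x$-derivatives and the chain rule in Malliavin calculus, one reduces to: (a) Sobolev norms $\|\partial^\nu_x M_{i,j}(t,x)\|_{k,p}$, uniformly bounded in $t$ because $M_{i,j}$ is a quadratic expression in the $h_i$'s whose components $\beta^k_i(s,x)\mathbb{I}_{[0,t]}(s)$ are controlled by Lemma \ref{th: est-ab}(1) (the $\mathcal{H}$-norm of $\mathbb{I}_{[0,t]}$ contributes only a bounded factor after dividing by $t^{2H}$); and (b) negative moments $\me[(\det M(t,x))^{-q}]$ uniformly bounded in $t$, which is precisely Lemma \ref{th: est-M}. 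Combining (a), (b) and the standard fact that $L^\infty(\mp)$-negative moments together with Sobolev bounds propagate through the smooth map $A\mapsto A^{-1}$ on the cone of invertible matrices (via the formula $\partial_{x_i}M^{-1}=-M^{-1}(\partial_{x_i}M)M^{-1}$ iterated, and Hölder's inequality to split products of Sobolev factors against powers of $(\det M)^{-1}$), one obtains $\sup_{0<t\le 1}\|\partial^\nu_x M^{-1}_{i,j}(t,x)\|_{\mathbb{D}^{k,p}}<\infty$ for all $\nu,k,p$, i.e. $M^{-1}_{i,j}\in\mathcal{K}_0$. This completes the proof. \qed
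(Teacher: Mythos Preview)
Your proposal is correct and follows the same route as the paper: parts (1) and (2) are deduced from Lemma~\ref{th: est-ab}, and part (3) from the formula $M^{-1}=(\det M)^{-1}\mathrm{adj}(M)$ together with Lemma~\ref{th: est-ab}(1) and Lemma~\ref{th: est-M}. You have in fact filled in considerably more detail than the paper's one-line proof---in particular the joint $(t,x)$-regularity required by property (1) of the definition of $\mathcal{K}_r$, and the need to differentiate the representation of $\delta h_i$ in $x$ to obtain the $t^H$ scaling for all $\partial_x^\nu \delta h_i$---but the underlying strategy is identical.
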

\begin{proof}
The first two statements are immediate consequences of Lemma \ref{th: est-ab}.  The third statement  follows by writing $M^{-1}=\frac{\mathrm{adj}M}{\det M}$,  estimates in Lemma \ref{th: est-ab} (1) and Lemma \ref{th: est-M}. \end{proof}

Now we can state one of our main results in this note.

\begin{theorem}\label{th: IBP}
Let $f$ be any $C^\infty$ bounded  function and $\Phi(t,x): \Omega\rightarrow\mathcal{K}_r$ we have
$$\me\left(\Phi(t,x)V_if(X_t^x)\right)=\me\left((T^*_{V_i}\Phi(t,x)) f(X_t^x)\right),$$ 
where $T^*_{V_i}\Phi(t,x)$ is an element in $\mathcal{K}_{r-1}$ with probability one.
\end{theorem}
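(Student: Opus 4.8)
The plan is to combine the commutation relation for $V_i P_t f$ with the Malliavin integration-by-parts formula, using the matrix $M(t,x)$ as the natural ``inverse Malliavin matrix'' adapted to the frame $(V_1,\dots,V_n)$. First I would record the pointwise identity coming from the chain rule: for a fixed direction, $\mathbf{D}_s\big(f(X_t^x)\big) = \sum_{k} (V_k f)(X_t^x)\,\big(\mathbf{J}_t\mathbf{J}_s^{-1}V_k(X_s)\big)$, which by \eqref{rel-beta} can be rewritten in terms of the functions $\beta^k_i(s,x)$; concretely, pairing $\mathbf{D}f(X_t^x)$ against $h_j(t,x)$ in $\mathcal{H}$ and using the definition \eqref{def-M} of $M$ together with $\mathbf{D}_s^k X_t = \mathbf{J}_t\mathbf{J}_s^{-1}V_k(X_s)$ yields, after applying $\mathbf{J}_t^{-1}$ and \eqref{rel-beta},
\[
\langle \mathbf{D}\big(f(X_t^x)\big), h_j(t,x)\rangle_{\mathcal{H}} = t^{2H}\sum_{k=1}^n M_{j,k}(t,x)\,(V_k f)(X_t^x).
\]
Inverting the matrix $M$ (legitimate by Lemma \ref{th: est-M}, and with $M^{-1}\in\mathcal{K}_0$ by Lemma \ref{th: K}(3)) gives the key representation
\[
(V_i f)(X_t^x) = \frac{1}{t^{2H}}\sum_{j=1}^n M^{-1}_{i,j}(t,x)\,\langle \mathbf{D}\big(f(X_t^x)\big), h_j(t,x)\rangle_{\mathcal{H}}.
\]

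The second step is to plug this into $\me\big(\Phi(t,x) V_i f(X_t^x)\big)$ and apply the duality $\me\langle \mathbf{D}G, u\rangle_{\mathcal{H}} = \me\big(G\,\delta u\big)$ with $G = f(X_t^x)$ and $u = t^{-2H}\Phi(t,x)\,\sum_j M^{-1}_{i,j}(t,x)\,h_j(t,x)$. This produces
\[
\me\big(\Phi(t,x) V_i f(X_t^x)\big) = \me\Big( f(X_t^x)\, \delta\Big(\tfrac{1}{t^{2H}}\sum_{j} \Phi(t,x) M^{-1}_{i,j}(t,x)\, h_j(t,x)\Big)\Big),
\]
so one is forced to set
\[
T^*_{V_i}\Phi(t,x) := \frac{1}{t^{2H}}\sum_{j=1}^n \delta\Big(\Phi(t,x)\, M^{-1}_{i,j}(t,x)\, h_j(t,x)\Big),
\]
and then expand $\delta(G\,h)$ via the standard rule $\delta(Gu) = G\,\delta u - \langle \mathbf{D}G, u\rangle_{\mathcal{H}}$, with $G = \Phi M^{-1}_{i,j}$ and $u = h_j$. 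The resulting expression has two types of terms: one carrying $\delta h_j(t,x)$, and one carrying $\langle \mathbf{D}(\Phi M^{-1}_{i,j}), h_j\rangle_{\mathcal{H}}$; the first uses Lemma \ref{th: K}(2) ($\delta h_j\in\mathcal{K}_1$), the second uses that $\langle \mathbf{D}(\,\cdot\,),h_j\rangle_{\mathcal{H}}$ gains a factor $t^{H}$ in $\mathcal{K}$-order because $\|h_j(t,x)\|_{\mathcal{H}}$ is $O(t^H)$ (again from Lemma \ref{th: est-ab}).

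The final step is the $\mathcal{K}_{r-1}$-membership bookkeeping. I would prove a short ``algebra'' lemma: $\mathcal{K}_r\cdot\mathcal{K}_{r'}\subset\mathcal{K}_{r+r'}$ under multiplication, $\mathbf{D}:\mathcal{K}_r\to\mathcal{K}_r(\mathcal{H})$, and pairing against $h_j$ maps $\mathcal{K}_r(\mathcal{H})\to\mathcal{K}_{r+1}$, so that $\delta(G h_j)\in\mathcal{K}_{r+1}$ whenever $G\in\mathcal{K}_r$; combined with the prefactor $t^{-2H}$ (which shifts $\mathcal{K}$-order down by $2$) and $M^{-1}\in\mathcal{K}_0$, $\Phi\in\mathcal{K}_r$, one gets $T^*_{V_i}\Phi\in\mathcal{K}_{(r+0)+1-2} = \mathcal{K}_{r-1}$, as claimed. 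The smoothness and continuity in $(t,x)$ required by part (1) of the definition of $\mathcal{K}_r$ follow from differentiating the SDEs \eqref{equ-alpha}, \eqref{equ-beta} under the integral sign and the corresponding regularity of $\Phi$, $M^{-1}$, and $\delta h_j$.

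The main obstacle I expect is not the formal manipulation but making the $\delta$-of-a-product computation rigorous in this fractional setting: one must ensure all the factors $\Phi M^{-1}_{i,j} h_j$ lie in the domain of $\delta$ with the requisite $\mathbb{D}^{k,p}$-regularity (this is exactly what Lemmas \ref{th: est-ab}, \ref{th: est-M}, \ref{th: K} were set up to provide), and one must track the powers of $t$ carefully through every term of $\delta(Gu) = G\delta u - \langle \mathbf{D}G,u\rangle_{\mathcal{H}}$ to confirm the uniform bound $\sup_{0<t\le1} t^{-(r-1)H}\|\partial_x^\nu T^*_{V_i}\Phi\|_{\mathbb{D}^{k,p}} < \infty$ — in particular checking that the apparently dangerous $t^{-2H}$ prefactor is always compensated, once by the $t^{H}$ from $\delta h_j$ or from the $\mathcal{H}$-pairing, and once by the $t^{H}$ hidden in $\|h_j\|_{\mathcal{H}}$ inside the remaining pairing term.
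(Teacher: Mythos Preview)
Your overall strategy—express $(V_if)(X_t^x)$ through pairings $\langle \mathbf{D}f(X_t^x), h_l\rangle_{\mathcal{H}}$, apply the $\mathbf{D}$–$\delta$ duality, expand via $\delta(Gu)=G\,\delta u-\langle\mathbf{D}G,u\rangle_{\mathcal{H}}$, and then do $\mathcal{K}$-order bookkeeping using Lemma~\ref{th: K}—is exactly the paper's approach.

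There is, however, a genuine computational slip in your key identity. Your chain-rule line
$\mathbf{D}_s f(X_t^x)=\sum_k (V_kf)(X_t^x)\,\mathbf{J}_t\mathbf{J}_s^{-1}V_k(X_s)$
is not correct: the index on $V_\bullet(X_s)$ must be the Malliavin direction index $j$, not the summation index attached to $(V_kf)$. Working it out properly from $\mathbf{D}^j_s X_t=\mathbf{J}_t\mathbf{J}_s^{-1}V_j(X_s)$ together with \eqref{rel-beta} and the definition of $\alpha$ gives
\[
\langle \mathbf{D}f(X_t^x),h_l(t,x)\rangle_{\mathcal{H}}
= t^{2H}\sum_{k,m} M_{l,k}(t,x)\,\alpha^k_m(t,x)\,(V_mf)(X_t^x),
\]
so an $\alpha$-factor survives that you have silently dropped (``applying $\mathbf{J}_t^{-1}$'' does not remove it, because the derivatives $(V_mf)$ are evaluated at $X_t^x$, not at $x$). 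Inverting therefore requires both $M^{-1}$ \emph{and} $\beta=\alpha^{-1}$, and the correct representation is the paper's formula \eqref{rep-Vf}:
\[
(V_if)(X_t^x)=\frac{1}{t^{2H}}\sum_{j,l}\beta^i_j(t,x)\,M^{-1}_{j,l}(t,x)\,\langle \mathbf{D}f(X_t^x),h_l(t,x)\rangle_{\mathcal{H}}.
\]
Since $\beta\in\mathcal{K}_0$ by Lemma~\ref{th: K}(1), inserting this extra factor leaves your $\mathcal{K}_{r-1}$ accounting untouched; the remainder of your argument (the product rule for $\delta$, and the compensation of $t^{-2H}$ by $\delta h_l\in\mathcal{K}_1$ on one term and by $\|h_l\|_{\mathcal{H}}=O(t^H)$ on the other) goes through exactly as you describe and matches the paper.
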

\begin{proof}
This is primarily integration by parts together with the estimates obtained before. First note
\begin{align*}
\mathbf{D}^j_s f(X_t)&=\langle \nabla f(X_t), \mathbf{D}_s^j X_t\rangle\\
&=\langle\nabla f(X_t), \mathbf{J}_t\mathbf{J}_s^{-1} V_j(X_s)\rangle\\
&=\sum_{k,l=1}^n h^j_k(t)\alpha_l^k(t)(V_l f)(X_t).
\end{align*}
Hence
\begin{align}\label{rep-Vf}
V_if(X_t)=\frac{1}{t^{2H}} \sum_{j,l=1}^n \beta^i_j(t)M_{jl}^{-1}\langle \mathbf{D} f(X_t), h_l(t)\rangle_{\mathcal{H}}.
\end{align}
Therefore, we have
\begin{align*}
&\me \left(\Phi(t,x) V_if(X_t)\right)\\=&\frac{1}{t^{2H}} \sum_{k,l=1}^n \me \left(\langle \mathbf{D} f(X_t), \Phi(t,x)\beta^i_k(t)M_{kl}^{-1}(t)h_l(t)\rangle_{\mathcal{H}} \right)\\
=&\frac{1}{t^{2H}}\sum_{k,l=1}^n\me \left( \big[\delta\left(\Phi(t,x)\beta^i_k(t)M_{kl}^{-1}(t)h_l(t)\right)\big] f(X_t)\right)\\
=&\sum_{k,l=1}^n \me \left( \bigg[\frac{1}{t^{2H}}\Phi(t,x)\beta_k^i(t)M_{kl}^{-1}(t)\delta h_l(t)-\frac{1}{t^{2H}}\langle \mathbf{D}(\Phi(t,x)\beta_k^i(t)M_{kl}^{-1}(t)), h_l(t)\rangle_{\mathcal{H}}\bigg]f(X_t)\right).
\end{align*}
By Lemma \ref{th: K}, the first term in the brackets above is in $\mathcal{K}_{r-1}$ and the second term is in $\mathcal{K}_r$. Finally, denote 
$$T^*_{V_i}\Phi(t,x)=\sum_{k,l=1}^n  \bigg[\frac{1}{t^{2H}}\Phi(t,x)\beta_k^i(t)M_{kl}^{-1}(t)\delta h_l(t)-\frac{1}{t^{2H}}\langle \mathbf{D}(\Phi(t,x)\beta_k^i(t)M_{kl}^{-1}(t)), h_l(t)\rangle_{\mathcal{H}}\bigg].$$
It is clear that $T^*_{V_i}\Phi(t,x)\in \mathcal{K}_{r-1}$. The proof is completed.
\end{proof}

\section{Gradient Bounds}

With the  integration by parts formula of Theorem \ref{th: IBP} in hands we can now prove our gradient bounds. We start with the following basic commutation formula:

\begin{lemma}\label{th: derivative-semigroup}
For $i=1,2,...,n$, we have the commmutation

\[
V_i P_t f (x)=\mathbb{E} \left( \left( (\mbj_tV_i) f \right)(X_t^x)\right)=
\mathbb{E} \left( \sum_{k=1}^n\alpha^i_k(t,x) V_k f (X_t^x) \right),
\]
where the $\alpha (t,x)$ solve the system of stochastic differential equations (\ref{equ-alpha}).
\end{lemma}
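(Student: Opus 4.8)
The plan is to differentiate under the expectation sign in the definition $P_t f(x) = \mathbb{E}(f(X_t^x))$ and then identify the resulting derivative with the pushforward of $V_i$ by the flow. First I would recall that under Hypothesis \ref{H1}, the vector fields $V_i$ and their derivatives are bounded at every order, so by Lemma \ref{HN} and the standard estimates on $\mathbf{J}_t$ coming from \eqref{equ-J} (together with Lemma \ref{th: est-ab}), the map $x \mapsto X_t^x$ is smooth and its derivative $\mathbf{J}_t = \partial X_t^x/\partial x$ has moments of all orders. This regularity and integrability is exactly what licenses the interchange of $V_i = \sum_j V_i^j(x)\,\partial/\partial x_j$ with $\mathbb{E}$: one gets
\[
V_i P_t f(x) = \mathbb{E}\big( \langle \nabla f(X_t^x), \mathbf{J}_t V_i(x)\rangle \big) = \mathbb{E}\big( ((\mathbf{J}_t V_i)f)(X_t^x)\big),
\]
using the chain rule $V_i(f\circ \Phi(t,\cdot))(x) = \langle \nabla f(X_t^x), \mathbf{J}_t V_i(x)\rangle$ and the relation $(\Phi_{t*}W)(X_t^x) = \mathbf{J}_t W(x)$ recalled before \eqref{alpha}.

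Next I would invoke the definition of $\alpha(t,x)$ in \eqref{alpha}, namely $\mathbf{J}_t V_i(x) = \sum_{k=1}^n \alpha_k^i(t,x) V_k(X_t^x)$, which is legitimate since $(V_1(x),\dots,V_n(x))$ is a basis of $\mathbb{R}^n$ at every point and hence so is $(V_1(X_t^x),\dots,V_n(X_t^x))$. Substituting this into the expression above gives
\[
V_i P_t f(x) = \mathbb{E}\Big( \sum_{k=1}^n \alpha_k^i(t,x)\, (V_k f)(X_t^x)\Big),
\]
which is the claimed commutation. Finally, the statement that $\alpha(t,x)$ solves \eqref{equ-alpha} is precisely Lemma \ref{th: equ-alpha-beta}, so nothing further is needed there.

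The main obstacle is the justification of differentiating under the expectation: one needs a dominated-convergence argument for the difference quotients $\big(P_tf(x+\varepsilon e_j) - P_tf(x)\big)/\varepsilon$, which requires uniform (in $\varepsilon$, locally in $x$) $L^1$ bounds on the difference quotients of $x \mapsto f(X_t^x)$. Since $f$ is only assumed bounded and $C^\infty$ — not with bounded derivatives — one should first write $\langle \nabla f(X_t^x), \cdot\rangle$ and control $\nabla f$ on compact sets, using that $X_t^x$ ranges (with high probability) over a bounded region by the moment bounds of Lemma \ref{HN}; alternatively, one localizes $f$ and passes to the limit. Once the $L^p$ integrability of $\mathbf{J}_t$ and of the difference quotients of $X_t^x$ in $x$ is in hand (all of which follows from the cited estimates on \eqref{equ-J} and Lemma \ref{th: est-ab}(1)), the interchange is routine and the rest is bookkeeping.
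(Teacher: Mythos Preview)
Your proposal is correct and follows the same approach as the paper: differentiate under the expectation to obtain $W P_t f(x)=\mathbb{E}(((\mbj_tW)f)(X_t^x))$ for a smooth vector field $W$, then apply the definition \eqref{alpha} of $\alpha$ and invoke Lemma~\ref{th: equ-alpha-beta}. In fact you are considerably more detailed than the paper, which simply asserts the first identity and refers to ``the computations in the previous section''; your discussion of the dominated-convergence justification is a welcome addition rather than a deviation.
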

\begin{proof}
For any $C^\infty_b$-vector field $W$ on $\mr^n$ we have
\begin{align*}
WP_tf(x)=\me  \left( ((\mbj_t W)f)(X_t^x)\right).
\end{align*}
The remainder of the proof is then clear from the computations in the previous section.
\end{proof}

Finally we have the following gradient bounds.

\begin{theorem}\label{th: gen-grad-bound}
Let $p>1$. For $i_1,...,i_k \in \{1,...,n\}$, and $x \in ,\mathbb{R}^n$, we have
\[
\left| V_{i_1}...V_{i_k} P_t f (x) \right| \le C(t,x) ( P_t f^p(x) )^{\frac{1}{p}} \quad t \in [0,1]
\]
with  $C(t,x)=O\left(\frac{1}{t^{Hk}}\right)$ when $t \to 0$.
\end{theorem}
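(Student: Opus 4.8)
The plan is to prove the bound by induction on $k$, using the commutation formula of Lemma \ref{th: derivative-semigroup} as the base step and the integration by parts formula of Theorem \ref{th: IBP} to convert each derivative that hits $f$ into a multiplier against $f$ itself. For $k=1$: by Lemma \ref{th: derivative-semigroup} we have $V_i P_t f(x) = \me\left(\sum_k \alpha^i_k(t,x) V_k f(X_t^x)\right)$. Since the constant map $1 \in \mathcal K_0$ trivially, and since Theorem \ref{th: IBP} applies with $\Phi(t,x) = \alpha^i_k(t,x) \in \mathcal K_0$ (by Lemma \ref{th: K}(1)), we may rewrite $\me\left(\alpha^i_k(t,x) V_k f(X_t^x)\right) = \me\left((T^*_{V_k}\alpha^i_k(t,x)) f(X_t^x)\right)$ where $T^*_{V_k}\alpha^i_k(t,x) \in \mathcal K_{-1}$. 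Thus $V_i P_t f(x) = \me\left(\Psi_i(t,x) f(X_t^x)\right)$ for some $\Psi_i(t,x) \in \mathcal K_{-1}$, and Hölder's inequality with exponents $p$ and $q = p/(p-1)$ gives
\[
|V_i P_t f(x)| \le \left(\me |\Psi_i(t,x)|^q\right)^{1/q} \left(\me |f(X_t^x)|^p\right)^{1/p} = \left(\me |\Psi_i(t,x)|^q\right)^{1/q} (P_t f^p(x))^{1/p}.
\]
By definition of $\mathcal K_{-1}$, the prefactor is $O(t^{-H})$ as $t \to 0$, which is the claimed rate for $k=1$.

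For the inductive step, suppose the procedure has expressed $V_{i_2}\cdots V_{i_k} P_t f(x) = \me\left(\Psi(t,x) f(X_t^x)\right)$ with $\Psi(t,x) \in \mathcal K_{-(k-1)}$; the subtlety is that to apply $V_{i_1}$ I cannot differentiate under the expectation directly against $f(X_t^x)$ since $f$ is only bounded. Instead I should keep the semigroup structure: one writes $V_{i_1} V_{i_2}\cdots V_{i_k} P_t f = V_{i_1} P_t(\text{something})$? — but $V_{i_2}\cdots V_{i_k} P_t f$ is not of the form $P_t g$. The cleaner route is to use the flow/cocycle structure: split $P_t = P_{t/2} P_{t/2}$ and write $V_{i_1}\cdots V_{i_k} P_t f(x) = V_{i_1}\cdots V_{i_k} P_{t/2}(P_{t/2} f)(x)$, apply the $k$-fold version of the argument above to the function $g = P_{t/2} f$ (which is now smooth and bounded, so all the Malliavin calculus manipulations are justified), obtaining $|V_{i_1}\cdots V_{i_k} P_{t/2} g(x)| \le C(t/2,x) \|g\|_\infty$ with $C(t/2,x) = O((t/2)^{-Hk})$, and then bound $\|g\|_\infty = \|P_{t/2} f\|_\infty$. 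But this gives $\|f\|_\infty$ on the right, not $(P_t f^p)^{1/p}$. To keep the $L^p$ form one should instead apply Hölder only at the very end: carry out $k$ successive integration-by-parts reductions, each lowering the $\mathcal K$-index by one via Theorem \ref{th: IBP}, to reach $V_{i_1}\cdots V_{i_k} P_t f(x) = \me\left(\Psi_{i_1\cdots i_k}(t,x) f(X_t^x)\right)$ with $\Psi_{i_1\cdots i_k}(t,x) \in \mathcal K_{-k}$, then apply Hölder once.

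The mechanism for iterating is: after one step, $V_{i_2}\cdots V_{i_k} P_t f(x) = \me(\Psi f(X_t^x))$ is not directly amenable to applying $V_{i_1}$, so one instead performs all $k$ commutations \emph{before} any integration by parts. Concretely, iterate Lemma \ref{th: derivative-semigroup}-type commutations: $V_{i_1}\cdots V_{i_k} P_t f(x)$ equals $\me$ of a sum of terms of the form $(\text{smooth }\mathcal K_0\text{ coefficient}) \cdot (V_{j} f)(X_t^x)$ — here one uses that differentiating $\alpha^i_k(t,x)$ in $x$ stays in $\mathcal K_0$ by Lemma \ref{th: K}(1) and that products and $x$-derivatives of $\mathcal K_0$ elements remain in $\mathcal K_0$ — so only one factor of $V_j f(X_t^x)$ remains. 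Then a \emph{single} application of Theorem \ref{th: IBP} removes that last derivative, lowering the index to $\mathcal K_{-1}$. So in fact the answer is $C(t,x) = O(t^{-H})$ for every $k$?? No: the issue is that commuting $k$ vector fields past $P_t$ produces, at intermediate stages, terms where $V_j$ has already acted and then $V_{i}$ acts again on $(V_j f)(X_t^x) = \sum \cdots (V_\ell f)(X_t^x) \cdots$, and each such action brings down another Jacobian-type factor but does \emph{not} reduce the number of surviving $f$-derivatives below one until IBP is applied $k$ times in total. Re-examining: acting with $V_i$ on $\me(\alpha(t,x) (V_j f)(X_t^x))$ requires first turning $(V_j f)(X_t^x)$ into $\me$-against-$f$ form (else we differentiate a non-smooth function), and each IBP costs one power of $t^{-H}$; doing this $k$ times yields $\Psi_{i_1\cdots i_k} \in \mathcal K_{-k}$. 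The main obstacle, therefore, is bookkeeping: setting up the recursion so that at stage $m$ the object is $\me\left(\sum (\text{coefficient in } \mathcal K_{-(m-1)}) \cdot (V_j P_{t}^{(m)} f_m)(\cdot)\right)$ cleanly, verifying that $x$-differentiation commutes appropriately with the expectation and preserves the $\mathcal K_r$ classes (which follows from the smoothness-in-$x$ clause in the definition of $\mathcal K_r$ and from Lemma \ref{th: est-ab}(1)), and tracking that the total index drop after $k$ steps is exactly $k$. Once the final identity $V_{i_1}\cdots V_{i_k} P_t f(x) = \me(\Psi_{i_1\cdots i_k}(t,x) f(X_t^x))$ with $\Psi_{i_1\cdots i_k}(t,x) \in \mathcal K_{-k}$ is established, Hölder's inequality with exponents $p, q$ gives
\[
|V_{i_1}\cdots V_{i_k} P_t f(x)| \le \left(\me|\Psi_{i_1\cdots i_k}(t,x)|^q\right)^{1/q} (P_t f^p(x))^{1/p},
\]
and setting $C(t,x) = \left(\me|\Psi_{i_1\cdots i_k}(t,x)|^q\right)^{1/q}$, the $\mathcal K_{-k}$ membership forces $C(t,x) = O(t^{-Hk})$ as $t \to 0$, completing the proof.
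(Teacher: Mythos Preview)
Your final argument---express $V_{i_1}\cdots V_{i_k} P_t f(x)=\me\big(\Psi(t,x)\,f(X_t^x)\big)$ with $\Psi\in\mathcal K_{-k}$ by iterating Lemma~\ref{th: derivative-semigroup} and Theorem~\ref{th: IBP}, then apply H\"older---is exactly the paper's proof. Two points of confusion in your write-up are worth correcting, however.

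First, your worry that ``$f$ is only bounded'' is misplaced: throughout this section (and already in Theorem~\ref{th: IBP}) $f$ is taken $C^\infty$ and bounded, so differentiating $\me\big(\Psi(t,x)\,f(X_t^x)\big)$ in $x$ under the expectation is perfectly legitimate. This makes the recursion clean, without the detours you explore: if $V_{i_2}\cdots V_{i_k} P_t f(x)=\me\big(\Psi\,f(X_t^x)\big)$ with $\Psi\in\mathcal K_{-(k-1)}$, then applying $V_{i_1}$ in $x$ yields
\[
\me\big((V_{i_1}\Psi)\,f(X_t^x)\big)\;+\;\sum_{j=1}^n\me\big(\Psi\,\alpha^{i_1}_j(t,x)\,(V_jf)(X_t^x)\big).
\]
The first term already has the right form, with coefficient $V_{i_1}\Psi\in\mathcal K_{-(k-1)}\subset\mathcal K_{-k}$ (the $x$-derivative stays in the same class by definition of $\mathcal K_r$). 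In the second term the coefficient $\Psi\,\alpha^{i_1}_j\in\mathcal K_{-(k-1)}$, and one application of Theorem~\ref{th: IBP} sends it to $T^*_{V_j}(\Psi\,\alpha^{i_1}_j)\in\mathcal K_{-k}$. Summing, the step is closed and the index has dropped by one.

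Second, the detour through ``$P_t=P_{t/2}P_{t/2}$'' is not merely unhelpful but actually invalid: for $H\neq\tfrac12$ fractional Brownian motion is not Markov, so $(P_t)_{t\ge0}$ is \emph{not} a semigroup and no such factorization holds. You should drop that paragraph entirely.
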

\begin{proof}
By Theorem \ref{th: IBP} and Lemma \ref{th: derivative-semigroup},  for each $k\geq 1$ there exists  a $\Phi^{(-k)}(t,x)\in\mathcal{K}_{-k}$ such that

$$V_{i_1}...V_{i_k}P_t f(x)=\me  \left( \Phi^{(-k)}(t,x)f(X_t) \right).$$
Now an application of H\"{o}lder's inequality gives us the desired result.
\end{proof}

\begin{remark}
Here let us emphasize a simple but important consequence of the above theorem that, suppose $f$ is uniformly bounded, then
\[
\left| V_{i_1}...V_{i_k} P_t f (x) \right| \le C(t,x) \| f \|_\infty \quad t \in [0,1]
\]
where  $C(t,x)=O\left(\frac{1}{t^{Hk}}\right)$ as $t \to 0$.
\end{remark}

Another direct corollary of Theorem \ref{th: gen-grad-bound} is the following inverse Poincar\'{e} inequality. 
\begin{corollary}
For $i_1,...,i_k \in \{1,...,n\}$, and $x \in \mathbb{R}^n$,
\[
\left| V_{i_1}...V_{i_k} P_t f (x) \right|^2 \le C(t,x) ( P_t f^2(x) -(P_t f)^2 (x))  \quad t \in [0,1]
\]
with  $C(t,x)=O\left(\frac{1}{t^{2Hk}}\right)$ when $t \to 0$.
\end{corollary}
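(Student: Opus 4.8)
The plan is to reduce the statement to Theorem \ref{th: gen-grad-bound} by using the elementary fact that the differential operator $V_{i_1}\cdots V_{i_k}$ annihilates constants. Fix $x\in\mr^n$ and $t\in[0,1]$, and set $c=P_tf(x)$, which is a finite constant since $f$ is bounded. The first step is the observation that $P_t(f-c)=P_tf-c$, so the function $g:=f-c$ is again $C^\infty$ bounded and satisfies $V_{i_1}\cdots V_{i_k}P_tg=V_{i_1}\cdots V_{i_k}P_tf$, the constant $c$ being killed by any of the vector fields $V_{i_j}$.

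Next I would apply Theorem \ref{th: gen-grad-bound} with $p=2$ to the function $g$, which gives
\[
\left| V_{i_1}\cdots V_{i_k} P_t f(x) \right|^2 = \left| V_{i_1}\cdots V_{i_k} P_t g(x) \right|^2 \le C(t,x)^2\, P_t g^2(x),
\]
where $C(t,x)=O(t^{-Hk})$, hence $C(t,x)^2=O(t^{-2Hk})$, as $t\to 0$. It then remains only to identify $P_tg^2(x)$. Expanding the square and using $P_t(f-c)^2=P_tf^2-2c\,P_tf+c^2$ together with the choice $c=P_tf(x)$, one obtains at the point $x$
\[
P_t g^2(x) = P_t f^2(x) - 2\,(P_t f(x))^2 + (P_t f(x))^2 = P_t f^2(x) - (P_t f)^2(x),
\]
which is exactly the variance-type quantity on the right-hand side of the claim. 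Combining the two displays yields the asserted inequality, with the constant $C(t,x)^2$; after relabelling this is $O(t^{-2Hk})$ as $t\to 0$.

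There is essentially no serious obstacle here. The only point needing a moment's care is that the shift constant $c=P_tf(x)$ depends on both $t$ and $x$; but since these are frozen throughout the argument this introduces no circularity, and subtracting a constant preserves the standing hypothesis (implicit in Theorem \ref{th: gen-grad-bound} through its reliance on Theorem \ref{th: IBP}) that $f$ be $C^\infty$ and bounded. The one genuine ingredient, namely the $L^p$ gradient bound of Theorem \ref{th: gen-grad-bound}, together with the integration by parts formula of Theorem \ref{th: IBP} and the Malliavin matrix estimate of Lemma \ref{th: est-M} behind it, has already been established.
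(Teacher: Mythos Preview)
Your argument is correct and is essentially the same as the paper's: the paper applies Theorem \ref{th: gen-grad-bound} with $p=2$ to $f-C$ for an arbitrary constant $C$ and then minimizes over $C$, whereas you directly plug in the minimizer $c=P_tf(x)$ and expand. The two arguments are identical in substance.
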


\begin{proof}
By Theorem \ref{th: gen-grad-bound}, for any constant $C\in\mr$ we have
\[
\left| V_{i_1}...V_{i_k} P_t f (x) \right|^2=\left| V_{i_1}...V_{i_k} P_t (f-C) (x) \right|^2  \le C(t,x) ( P_t (f-C)^2(x) ) \quad t \in [0,1]
\]
with  $C(t,x)=O\left(\frac{1}{t^{2Hk}}\right)$ when $t \to 0$. Now minimizing $C\in \mr$ gives us the desired result.
\end{proof}

\begin{remark}
For each smooth function $f:\mathbb{R}^n \rightarrow \mathbb{R}$, denote
\[
\Gamma (f)=\sum_{i=1}^n (V_i f)^2.
\]
We also have, for $i_1,...,i_k \in \{1,...,n\}$, and $x \in \mathbb{R}^n$,
\[
\left| V_{i_1}...V_{i_k} P_t f (x) \right|^2 \le C(t,x)  P_t \Gamma (f)(x), \quad t \in [0,1]
\]
with   $C(t,x)=O\left(\frac{1}{t^{2H(k-1)}}\right)$ when $t \to 0$.  Indeed, by Theorem \ref{th: IBP} and Lemma \ref{th: derivative-semigroup}, we know that for each $k\geq 1$, there exists  $\Phi^{(1-k)}_j(t,x)\in \mathcal{K}_{1-k}, j=1,2,...,n$ such that
$$V_{i_1}...V_{i_k} P_tf(x)=\me \left( \Phi^{(1-k)}_{j}(t,x) (V_jf)(X_t)\right).$$
The sequel of the argument is then clear.
\end{remark}

\section{A global gradient bound }

Throughout our discussion in this section, we show that under some additional conditions on the vector fields $V_i,..., V_n$, we are able to obtain
$$ \sqrt{\Gamma(P_tf)}\leq  P_t(\sqrt{\Gamma (f)}),$$
uniformly in $x$, where we denoted as above

\[
\Gamma (f)=\sum_{i=1}^n (V_i f)^2.
\]

  For this purpose, we need the following additional structure equation imposed on vector fields $V_i,..., V_d$.
\begin{hypothesis}\label{H2} In addition to Hypothesis \ref{H1}, we assume
the smooth and bounded functions $\omega_{ij}^k$ satisfy:
\begin{align*}\label{eq:antisym-V}
\omega_{ij}^k =-\omega_{ik}^j,\quad\quad\quad 1\leq i,j,k\leq d.
\end{align*}
\end{hypothesis}

Interestingly, such an assumption already appeared in a previous work of the authors \cite{ba-ou} where they proved an asymptotic expansion of the density of $X_t$ when $t \to 0$.

\begin{remark}
In the case of a stochastic differential equation driven by a Brownian motion, the functional operator $P_t$ is a diffusion semigroup with infinitesimal generator $L=\frac{1}{2}\left( \sum_{i=1}^n V_i^2\right)$. The gradient subcommutation
\[
\sqrt{\Gamma(P_tf)}\leq  P_t(\sqrt{\Gamma f}),
\]
is then known to be equivalent to the fact that the Ricci curvature of the Riemannian geometry given by the vector fields $V_i$'s is non negative (see for instance \cite{bakry-stflour}).
\end{remark}

The following approximation result, which can be found for instance in \cite{FV-bk}, will also be used in the sequel:
\begin{proposition}\label{prop:lin-interpol}
For $m\ge 1$, let
$B^{m}=\{B^m_t;\, t\in [0,1]\}$ be the sequence of linear interpolations of
$B$ along the dyadic subdivision of $[0,1]$ of mesh $m$; that
is if $t_i^m= i 2^{-m} $ for $i=0,..., 2^m;$ then for $t \in
(t_i^m, t_{i+1}^m ],$
\begin{equation*}
B^{m}_t=B_{t_{i^m}} +
\frac{t-t_{i^m}}{t_{i+1}^m - t_i^m} (B_{t_{i+1}^m}
-B_{t_{i}^m} ).
\end{equation*}
Consider $X^{m}$ the solution to equation (\ref{equ}) restricted to $[0,1]$, where $B$ has been replaced by $B^{m}$. 
Then almost surely, for any $\ga<H$ and $t\in[0,1]$ the following holds true:
\begin{equation}\label{eq:limit-interpol}
\lim_{m\rightarrow\infty}  \|X^{x}-X^{m}\|_{\gamma} =0 
\end{equation}
\end{proposition}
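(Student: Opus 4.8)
The plan is to factor the convergence through two classical ingredients: (i) the convergence of the driving signals $B^m\to B$ in $\gamma$-H\"older norm, and (ii) the local Lipschitz continuity of the Young solution map $B\mapsto X$. Since $H>1/2$, equation (\ref{equ}) is a Young differential equation, so both ingredients are available in this regime. First I would reduce to the case $\gamma\in(1/2,H)$: on $[0,1]$ one has $\|f\|_\gamma\le\|f\|_{\tilde\gamma}$ whenever $\gamma\le\tilde\gamma$, so convergence in a stronger norm $\|\cdot\|_{\tilde\gamma}$ with $\tilde\gamma\in(1/2,H)$ immediately yields convergence in $\|\cdot\|_\gamma$ for every $\gamma<H$ (and dominates $\|\cdot\|_{\gamma,t}$ for all $t\in[0,1]$). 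Throughout I work on the almost sure event on which $B$ is $\gamma'$-H\"older continuous, fixing exponents $1/2<\gamma<\gamma'<H$.

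For ingredient (i), I would first establish the uniform convergence with rate: on each dyadic subinterval $[t_i^m,t_{i+1}^m]$ of length $2^{-m}$ the interpolation error is controlled by the oscillation of $B$, so $\|B-B^m\|_\infty\le C\,2^{-m\gamma'}\|B\|_{\gamma'}\to0$. Next I would prove the uniform-in-$m$ H\"older bound $\sup_m\|B^m\|_{\gamma'}\le C\|B\|_{\gamma'}$, the standard fact that a piecewise linear interpolant inherits the H\"older regularity of the interpolated path (treating separately pairs of times lying in the same dyadic interval and in distinct ones). Then the interpolation inequality $\|g\|_\gamma\le C\|g\|_\infty^{1-\gamma/\gamma'}\|g\|_{\gamma'}^{\gamma/\gamma'}$ applied to $g=B-B^m$, together with $\|B-B^m\|_{\gamma'}\le\|B\|_{\gamma'}+\|B^m\|_{\gamma'}\le C\|B\|_{\gamma'}$, yields $\|B-B^m\|_\gamma\to0$ almost surely.

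For ingredient (ii), I would invoke the continuity of the It\^o--Lyons map in the Young setting. Since the $V_i$ are bounded with bounded derivatives (Hypothesis \ref{H1}), the solutions $X^x$ and $X^m$ are globally defined $\gamma$-H\"older paths, and the stability estimate
$$\|X^x-X^m\|_\gamma\le C\big(\|B\|_\gamma,\ \sup_m\|B^m\|_\gamma,\ \|V\|_{C^2_b}\big)\,\|B-B^m\|_\gamma$$
holds, the constant being finite because all the H\"older norms appearing are finite and uniformly bounded in $m$ on our almost sure event. This estimate is obtained by the usual combination of the Young bound $|\int_s^t f\,dg-f(s)(g_t-g_s)|\le C\|f\|_\gamma\|g\|_\gamma|t-s|^{2\gamma}$ (valid since $2\gamma>1$) with a Gronwall argument on successive short intervals; it is exactly the content of the continuity results in \cite{FV-bk}. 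Combining with ingredient (i) gives $\|X^x-X^m\|_\gamma\to0$, and the reduction above extends this to all $\gamma<H$.

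The \emph{main obstacle} is ingredient (ii): making the constant in the stability estimate uniform in $m$ and genuinely depending only on the (almost surely finite, uniformly bounded) H\"older norms of the drivers. The delicate point is that the Gronwall argument must be run on a partition whose mesh is dictated by $\|B\|_\gamma$ and $\sup_m\|B^m\|_\gamma$ but not by $m$ itself, so that the number of steps, and hence the final constant, stays bounded as $m\to\infty$; the uniform bound $\sup_m\|B^m\|_{\gamma'}\le C\|B\|_{\gamma'}$ from ingredient (i) is precisely what secures this. Once this uniformity is in place, the remaining estimates are routine Young-integral manipulations.
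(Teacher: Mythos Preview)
The paper does not actually give a proof of this proposition: it is stated as a quoted approximation result ``which can be found for instance in \cite{FV-bk}'' and is used as a black box in the proof of Theorem \ref{th: inf-bound-alpha}. Your sketch is precisely the standard argument behind that citation---H\"older convergence of the dyadic interpolations combined with the local Lipschitz continuity of the Young/It\^o--Lyons solution map---so your approach is correct and fully consistent with what the paper invokes.
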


\begin{theorem}\label{th: inf-bound-alpha}Recall the definition of $\alpha(t,x)$ in (\ref{alpha}) and 
\begin{align}\label{equ: inf-bound-alpha}
d\alpha^i_j(t,x)=-\sum_{k,l=1}^n \alpha^i_k(t,x)\omega^j_{lk}(X_t^x)dB^{l}_t,\quad\quad\mathrm{with}\quad \alpha^i_j(0,x)=\delta^i_j.
\end{align}
Under Assumption \ref{H2},  uniformly in $t$ and $x$, we have
\begin{equation}\label{eq:bnd-first-deriv-infty}
\left\| \sum_{j=1}^n \alpha^i_j(t,x)^2 \right\|_{L^\infty}    \le 1;\quad\quad  \mathrm{and}\ \ \left\| \sum_{i=1}^n \alpha^i_j(t,x)^2 \right\|_{L^\infty}    \le 1,
\end{equation}
almost surely.
\end{theorem}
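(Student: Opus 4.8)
The plan is to establish the differential inequality for the quantity $N_i(t,x) := \sum_{j=1}^n \alpha^i_j(t,x)^2$ (for fixed $i$) and show it cannot increase, using the antisymmetry of Hypothesis \ref{H2}. Because the equation \eqref{equ: inf-bound-alpha} is driven by fractional Brownian motion with $H>1/2$ — so the stochastic integrals are of Young/pathwise type and ordinary calculus applies — the cleanest route is to pass through the linear interpolations $B^m$ of Proposition \ref{prop:lin-interpol}. For the approximating equation the driving path is $C^1$, so $\alpha^{i,m}(t,x)$ solves an ODE in $t$ (for a.e.\ fixed $\omega$), and I may differentiate $N_i^m(t,x) = \sum_j (\alpha^{i,m}_j)^2$ with respect to $t$ by the ordinary product rule.

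First I would compute, from \eqref{equ: inf-bound-alpha} written for $B^m$,
\[
\frac{d}{dt} N_i^m(t,x) = 2\sum_{j=1}^n \alpha^{i,m}_j \, \dot\alpha^{i,m}_j
= -2\sum_{j,k,l=1}^n \alpha^{i,m}_j \alpha^{i,m}_k \,\omega^j_{lk}(X^m_t)\,\dot B^{m,l}_t .
\]
For each fixed $l$, the inner double sum $\sum_{j,k}\alpha^{i,m}_j\alpha^{i,m}_k\,\omega^j_{lk}$ is a contraction of the symmetric matrix $\alpha^{i,m}_j\alpha^{i,m}_k$ against the array $\omega^j_{lk}$, which by Hypothesis \ref{H2} satisfies $\omega^j_{lk} = -\omega^k_{lj}$, i.e.\ it is antisymmetric in the pair $(j,k)$. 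A symmetric tensor contracted with an antisymmetric one vanishes, so $\frac{d}{dt}N_i^m(t,x) = 0$ identically in $t$. Hence $N_i^m(t,x) = N_i^m(0,x) = \sum_j (\delta^i_j)^2 = 1$ for all $t\in[0,1]$, $m\ge1$, and a.e.\ $\omega$.

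Next I would pass to the limit $m\to\infty$. By Proposition \ref{prop:lin-interpol}, $X^m \to X^x$ almost surely in $\|\cdot\|_\gamma$ for $\gamma<H$; since $\alpha$ is obtained from $(X,B)$ by solving the linear Young differential equation \eqref{equ: inf-bound-alpha}, continuity of the Young/It\^o--Lyons solution map gives $\alpha^{i,m}(t,x)\to\alpha^i_j(t,x)$ a.s., uniformly in $t\in[0,1]$ (this is exactly the continuity statement underlying Proposition \ref{prop:lin-interpol} applied to the joint system for $(X,\alpha)$, or to \eqref{equ-J} and the algebraic relation defining $\alpha$). Therefore $N_i(t,x) = \lim_m N_i^m(t,x) = 1$ a.s., for every $t$ and $x$; in particular $\|\sum_j \alpha^i_j(t,x)^2\|_{L^\infty}\le1$ (in fact $=1$). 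The second inequality, for $\sum_i \alpha^i_j(t,x)^2$ with $j$ fixed, is obtained the same way: writing $A(t,x)=(\alpha^i_j(t,x))$, the claim is that every column of $A$ has Euclidean norm $\le 1$; equivalently one works with $\tilde N_j(t,x) = \sum_i (\alpha^i_j)^2 = (A^TA)_{jj}$ and differentiates, where now the contraction that appears is $\sum_{i}$ over the upper index, still producing a symmetric-times-antisymmetric pairing in the two lower indices $l\leftrightarrow$(the summed-over index) because of the same relation $\omega^j_{lk}=-\omega^k_{lj}$.

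The main obstacle is the limiting argument: one must be sure that the map $(X,B)\mapsto \alpha$ is continuous in the relevant $\gamma$-H\"older topologies so that the exact identity $N_i^m\equiv1$ survives the limit. This is standard for Young differential equations — the solution map is locally Lipschitz continuous in the driving signals — but it should be invoked carefully, either citing the continuity of the It\^o--Lyons map (as in \cite{FV-bk}) for the augmented system $(X,\mbj,\alpha)$, or noting that $\alpha$ is an algebraic (smooth, hence continuous) function of $\mbj$ and $X$ via $\mbj_t V_i(x)=\sum_k\alpha^i_k(t,x)V_k(X_t^x)$ together with convergence of $\mbj^m$. Once continuity is in hand, everything else is the one-line antisymmetry computation above.
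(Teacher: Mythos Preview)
Your approach is essentially the paper's: approximate $B$ by its dyadic piecewise-linear interpolations $B^m$, exploit the skew-symmetry of $\omega_l$ coming from Hypothesis~\ref{H2} to see that $\alpha^m$ stays orthogonal, and pass to the limit via the continuity of the Young solution map. The paper phrases the middle step as a product of matrix exponentials of skew-symmetric matrices, while you differentiate $\sum_j(\alpha^{i,m}_j)^2$ directly; these are the same computation, and both actually give the exact value $1$ rather than merely $\le 1$.

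One small point to tighten: your justification for the column sums $\tilde N_j=\sum_i(\alpha^i_j)^2$ is not quite right as written. Differentiating $\tilde N_j^m$ gives
\[
\frac{d}{dt}\tilde N_j^m=-2\sum_{i,k,l}\alpha^{i,m}_j\alpha^{i,m}_k\,\omega^j_{lk}(X^m_t)\,\dot B^{m,l}_t,
\]
and with $j$ fixed there is no symmetric/antisymmetric cancellation in the remaining indices; the relation $\omega^j_{lk}=-\omega^j_{kl}$ (antisymmetry in $l,k$) does not pair against anything symmetric in $(l,k)$ here. The clean fix is to upgrade your row computation to the full matrix $\alpha^m(\alpha^m)^T$: the same contraction shows $\frac{d}{dt}\bigl(\alpha^m(\alpha^m)^T\bigr)=0$, so $\alpha^m(\alpha^m)^T\equiv I$, hence $\alpha^m$ is orthogonal and the column norms are automatically $1$ as well. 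This is exactly what the paper's product-of-orthogonal-matrices formulation delivers in one stroke.
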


\begin{proof}
Let us thus consider  $X^m_t$ and $\alpha^m(t,x)$ the solution of (\ref{equ}) and (\ref{equ: inf-bound-alpha}) where $B$ is
replaced by $B^{m}$, that is
\begin{align*}
&dX_t^m= \sum_{i=1}^n V_i (X_s^m)  dB^{m,i}_s,\\
&d\alpha^m(t,x)=- \sum_{k=1}^n\alpha^m(t,x) \omega_{k} (X_s^m) dB^{m,k}_s,
\end{align*}
with $X_0=x$ and $\alpha(0,x)=I$. Here $\omega_k=(\omega_{kj}^i)$. In order to show that  the process $\alpha(t,x)$ is uniformly bounded, by applying Proposition \ref{prop:lin-interpol} to the couple $(X, \alpha)$,   it is sufficient to prove our uniform bounds on $\alpha^m(t,x)$, uniformly in $m$.
In the sequel set 
\begin{equation*}
\Delta B_{{t_{n-1}^{m}}t_{n}^{m}}^{k,m}:=
\frac{B^{k,m}_{t_{n}^{m}}-B^{k,m}_{t_{n-1}^{m}} }{t_{n}^m - t_{n-1}^m},
\quad\mbox{for}\quad 1\le n\le 2^m \mbox{ and } 1\le k\le n.
\end{equation*}
Then, for 
$t \in [t_{n-1}^m, t_{n}^m )$, we have
\[
d\alpha^m(t,x)=-\alpha^m(t,x) \sum_{k=1}^n \omega_{k} (X_{t}^m) \, \Delta B_{{t_{n-1}^{m}}t_{n}^{m}}^{k,m} dt ,
\]
Therefore, for $t \in [t_{n-1}^m, t_{n}^m )$, we obtain
\[
\alpha^m(t,x)=\left(e^{-\sum_{k=1}^n\Delta B_{{t_{n-1}^{m}}t_{n}^{m}}^{k,m}\int_{t^m_{n-1}}^t \omega_{k} (X_{s}^m) ds }\right)\alpha^m({t_{n-1}^m,x})
\]
Proceeding inductively, we end up with the following identity, valid for $t \in [t_{n-1}^m, t_{n}^m )$ and
$n=0,...,2^m$:
\begin{multline}\label{un}
\alpha^m(t,x) 
=e^{- \sum_{k=1}^n\Delta B_{{t_{n-1}^{m}}t_{n}^{m}}^{k,m} \int_0^t\omega_{k} (X_{s}^m)ds } \times
\cdots \times 
e^{- \sum_{k=1}^n\Delta B_{{t_{0}^{m}}t_{1}^{m}}^{k,m} \int_{t_0^m}^{t_1^m}\omega_{k} (X_{s}^m)ds}.
\end{multline}
By Assumption \ref{H2}, each $\omega_k$ is a skew-symmetric matrix,  expression (\ref{un}) gives us
\begin{align*}
 \left\| \sum_{j=1}^n \alpha^{m,i}_j(t,x)^2 \right\|_{L^\infty}    \le 1;\quad\quad  \mathrm{and}\ \ \left\| \sum_{i=1}^n \alpha^{m,i}_j(t,x)^2 \right\|_{L^\infty}    \le 1.
\end{align*}
 This is our claimed uniform bound on $\alpha^m(t,x)$, from which the end of our proof is easily deduced.

\end{proof}

As a direct consequence of Lemma \ref{th: derivative-semigroup} and Theorem \ref{th: inf-bound-alpha}, we have the main result of this section.

\begin{theorem}
Under Assumption \ref{H2}, we have uniformly in $x$
$$\sqrt{\Gamma(P_tf)}\leq P_t(\sqrt{\Gamma (f)}).$$
\end{theorem}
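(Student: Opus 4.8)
The plan is to combine the commutation formula of Lemma \ref{th: derivative-semigroup} with the pointwise bound on $\alpha$ from Theorem \ref{th: inf-bound-alpha} and a Cauchy--Schwarz argument. First I would fix a smooth bounded $f$ and a point $x$, and write, using Lemma \ref{th: derivative-semigroup},
\[
V_i P_t f(x) = \me\left( \sum_{k=1}^n \alpha^i_k(t,x) (V_k f)(X_t^x) \right), \qquad i=1,\dots,n.
\]
Squaring, summing over $i$, and using the Cauchy--Schwarz inequality in the $k$-summation together with the fact that $\sum_{i,k}\alpha^i_k(t,x)^2 \le n$ is not quite what I want; instead I would apply Cauchy--Schwarz more carefully: for each fixed $i$,
\[
\left| \sum_{k=1}^n \alpha^i_k(t,x) (V_k f)(X_t^x) \right|
\le \left( \sum_{k=1}^n \alpha^i_k(t,x)^2 \right)^{1/2} \left( \sum_{k=1}^n (V_k f)^2(X_t^x) \right)^{1/2}
= \left( \sum_{k=1}^n \alpha^i_k(t,x)^2 \right)^{1/2} \sqrt{\Gamma(f)}(X_t^x).
\]

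Next I would take the expectation and apply the (conditional) Cauchy--Schwarz / Jensen inequality to pull $\me$ inside, and then sum over $i$. Summing the squares of $V_i P_t f(x)$ over $i$ and using $\sum_{i=1}^n \left(\sum_{k}\alpha^i_k(t,x)^2\right) \le n$ would only give a constant $n$; the key point is that Theorem \ref{th: inf-bound-alpha} gives the \emph{sharper} bound $\sum_{j=1}^n \alpha^i_j(t,x)^2 \le 1$ for each $i$ and $\sum_{i=1}^n \alpha^i_j(t,x)^2 \le 1$ for each $j$, both almost surely and uniformly. So from the displayed inequality, almost surely,
\[
\left| \sum_{k=1}^n \alpha^i_k(t,x)(V_k f)(X_t^x) \right| \le \sqrt{\Gamma(f)}(X_t^x),
\]
hence $|V_i P_t f(x)| \le \me\left( \sqrt{\Gamma(f)}(X_t^x)\right) = P_t(\sqrt{\Gamma(f)})(x)$ for each $i$. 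That bounds each component but not yet the Euclidean norm $\sqrt{\Gamma(P_tf)} = \left(\sum_i (V_i P_t f)^2\right)^{1/2}$.

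To get the norm bound I would instead argue vectorially. Write $\nabla^V g := (V_1 g,\dots,V_n g)$ so that $\sqrt{\Gamma(g)} = |\nabla^V g|$. The commutation formula reads $\nabla^V P_t f(x) = \me\left( \alpha(t,x)\, (\nabla^V f)(X_t^x) \right)$ as an identity between $\mathbb{R}^n$-valued quantities, where $\alpha(t,x)$ acts as a matrix. Taking Euclidean norms and using the triangle inequality for the Bochner integral,
\[
|\nabla^V P_t f(x)| \le \me\left( \left| \alpha(t,x)(\nabla^V f)(X_t^x) \right| \right) \le \me\left( \|\alpha(t,x)\|_{\mathrm{op}} \, |(\nabla^V f)(X_t^x)| \right).
\]
The operator norm $\|\alpha(t,x)\|_{\mathrm{op}}$ is controlled by the two estimates in Theorem \ref{th: inf-bound-alpha}: the bound $\sum_{i}\alpha^i_j(t,x)^2 \le 1$ for all $j$ says each column of $\alpha^T$ has norm $\le 1$, and likewise for rows, so $\|\alpha(t,x)\|_{\mathrm{op}} \le 1$ (e.g.\ since $\|\alpha\|_{\mathrm{op}}^2 \le \|\alpha\|_{\mathrm{op},1}\|\alpha\|_{\mathrm{op},\infty}$, the product of the maximum column and row $\ell^1$-norms — or more directly via $\|\alpha\|_{\mathrm{op}}^2 \le \sum_{i,j}\alpha^i_j(t,x)^2$ combined with the refined bounds, whichever is cleanest). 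Therefore $\|\alpha(t,x)\|_{\mathrm{op}} \le 1$ almost surely, uniformly in $t,x$, giving
\[
\sqrt{\Gamma(P_t f)}(x) = |\nabla^V P_t f(x)| \le \me\left( |(\nabla^V f)(X_t^x)| \right) = \me\left( \sqrt{\Gamma(f)}(X_t^x)\right) = P_t\left(\sqrt{\Gamma(f)}\right)(x),
\]
uniformly in $x$, as desired. Finally I would note that the passage from smooth bounded $f$ to general $f$ (if needed) follows by the usual approximation argument, and the uniformity in $x$ is automatic since nowhere did the bound on $\alpha$ depend on $x$.

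\textbf{Main obstacle.} I expect the only real subtlety is the step converting the two scalar estimates of Theorem \ref{th: inf-bound-alpha} into the operator-norm bound $\|\alpha(t,x)\|_{\mathrm{op}}\le 1$ and justifying the vector-valued triangle inequality $|\me(\cdot)| \le \me(|\cdot|)$; both are standard, but one must be slightly careful that the bound one extracts is exactly $1$ and not merely $\sqrt{n}$ — this is precisely where the \emph{pair} of estimates (rows and columns) in Theorem \ref{th: inf-bound-alpha}, rather than a single one, is used. Everything else is a direct substitution of the commutation formula and Cauchy--Schwarz.
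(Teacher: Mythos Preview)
Your overall strategy coincides with the paper's: both pass from Lemma~\ref{th: derivative-semigroup} to the inequality $|\alpha(t,x)v|\le |v|$ for every $v\in\mr^n$. The paper pairs $\nabla^V P_t f(x)$ with a unit vector $a$ and then optimizes over $a$; you phrase it directly as $\|\alpha(t,x)\|_{\mathrm{op}}\le 1$; by duality these are the same statement. The vector-valued triangle inequality $|\me(Y)|\le\me(|Y|)$ is indeed routine.

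The genuine gap is exactly the step you flag, and neither of your two proposed fixes works. Having every row and every column of Euclidean norm at most $1$ does \emph{not} imply $\|\alpha\|_{\mathrm{op}}\le 1$: the $2\times 2$ matrix with all entries equal to $1/\sqrt{2}$ has unit rows and unit columns but operator norm $\sqrt{2}$. Your Schur-type bound $\|\alpha\|_{\mathrm{op}}^2\le\|\alpha\|_{1}\|\alpha\|_{\infty}$ involves the maximal $\ell^1$ (not $\ell^2$) row and column sums, and converting via Cauchy--Schwarz only yields $\|\alpha\|_{\mathrm{op}}\le\sqrt{n}$; the Frobenius route $\|\alpha\|_{\mathrm{op}}^2\le\sum_{i,j}(\alpha^i_j)^2\le n$ gives the same loss. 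So from the row/column estimates of Theorem~\ref{th: inf-bound-alpha} \emph{as stated} you cannot extract the constant $1$.

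The correct input, implicit in the proof of Theorem~\ref{th: inf-bound-alpha} (and tacitly used when the paper invokes that theorem here), is that $\alpha(t,x)$ is in fact \emph{orthogonal}. On each dyadic subinterval the approximation $\alpha^m$ solves $\dot U=-U\,M(s)$ with $M(s)=\sum_k\omega_k(X_s^m)\,\Delta B^{k,m}$ skew-symmetric by Assumption~\ref{H2}; hence $\tfrac{d}{ds}(UU^T)=-U(M+M^T)U^T=0$, so $\alpha^m(t,x)\alpha^m(t,x)^T\equiv I$, and passing to the limit gives $\alpha\alpha^T=I$, whence $\|\alpha(t,x)\|_{\mathrm{op}}=1$. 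With this in hand your argument goes through verbatim and is then essentially identical to the paper's.
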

\begin{proof}
By applying Lemma \ref{th: derivative-semigroup},  Cauchy-Schwarz inequality and then Theorem \ref{th: inf-bound-alpha}, we have for any vector $a=(a_i)\in\mr^n$
\begin{align*}
\sum_{i=1}^na_iV_i P_t f (x)&=
\mathbb{E} \left( \sum_{i,k=1}^na_i\alpha^i_k(t,x) V_k f (X_t^x) \right)\\
&\leq \me \left[ \left(\sum_{k=1}^n\left(\sum_{i=1}^na_i\alpha_k^i(t,x)\right)^2\right)^{\frac{1}{2}}\left(\sum_{k=1}^n (V_kf(X_t^x))^2\right)^{\frac{1}{2}} \right] \\
&\leq \|a\|\me\left(\sum_{k=1}^n V_kf(X_t^x)^2\right)^{\frac{1}{2}}.
\end{align*}
By choosing
$$a_i=\frac{(V_i P_t f) (x) }{\sqrt{  \sum_{i=1}^n  (V_i P_t f)^2(x) }},$$
we obtain
$$\sqrt{\Gamma(P_tf)}=\sqrt{\sum_{i=1}^n (V_iP_tf)^2}\leq   P_t(\sqrt{\Gamma (f)}).$$
The proof is completed.
\end{proof}
\begin{remark}
Since $P_t$ comes from probability measure, we observe from Jensen inequality that
$$\sqrt{\Gamma(P_tf)}\leq  P_t(\sqrt{\Gamma (f)})$$
implies
$${\Gamma(P_tf)}\leq  P_t(\Gamma f).$$
\end{remark}

\end{document}